\numberwithin{equation}{section}
\newtheorem{defn}{Definition}[section]
\newtheorem{theorem}{Theorem}[section]
\newtheorem{corollary}[theorem]{Corollary}
\newtheorem{lemma}[theorem]{Lemma}
\newtheorem{proposition}[theorem]{Proposition}
\newtheorem{remark}[theorem]{Remark}
\def \begineq{\begin{equation}}
\def \endeq{\end{equation}}
\def \bb{\mathbb}
\def \CC{{\bb{C}}}
\def \QQ{{\bb{Q}}}
\def \RR{{\bb{R}}}
\def \ZZ{{\bb{Z}}}
\def \({\left(}
\def \){\right)}
\def \<{\langle}
\def \>{\rangle}
\def \bar{\overline}
\begin{document}

\title[Hodge structures orbifold Hodge numbers and a correspondence in quasitoric orbifolds]{ Hodge structures orbifold Hodge numbers and a correspondence in quasitoric orbifolds}
\author[Saibal Ganguli]{Saibal Ganguli}
\address{  Harish-Chandra Research Institute
Chhatnag Road, Jhusi
Allahabad 211 019
India}
\email{saibalgan@gmail.com}

\subjclass[2010]{Primary 57R18 ; Secondary 55N32 ,32S35, 52B20,58A14,55N10,14M25}
\keywords{ Hodge structures, orbifold, quasitoric,projective toric}
\abstract
{ We  give  Hodge structures on quasitoric orbifolds. We define orbifold Hodge numbers and show a correspondence 
of orbifold Hodge numbers for crepant resolutions of quasitoric orbifolds.
In short we extend Hodge structures to a non complex non almost complex setting .}

\endabstract
\maketitle
\section{\bf{ Introduction } }
The purpose of this paper is to extend Hodge structures in a non complex non almost complex setting. First we give a
 canonical
Hodge structure (see \ref{can}) to quasitoric orbifolds. We compute  Hodge numbers. Then as an application we define orbifold Hodge
numbers and show a correspondence of these numbers for crepant resolutions. Since toric manifolds and possibly 
orbifolds are used widely in physics
as target spaces of  non linear sigma models, with  canonical Hodge  structures possible in quasitorics  
we feel  these spaces  can also be put into some consideration and inspection
by experts of the above  areas. Also this is an extension of Deligne's mixed Hodge structures in a non-complex 
algebraic setting
thus will draw interest of Mathematicians. During the process we also stumble on a short proof
of Hodge numbers of Projective toric orbifolds where its Hodge structure is Deligne's Hodge structures.
  
  Physicists believe that orbifold string theory is equivalent to ordinary string
theory of certain desingularizations. This belief motivated a body of conjectures,
collectively referred to as the orbifold string theory conjecture. The
conjecture we are interested in is the K-orbifold string theory conjecture. It
states that there is a natural isomorphism between the orbifold K-theory of
a Gorenstein orbifold and the ordinary K-theory of its crepant resolution.
To construct a natural isomorphism as the conjecture demands, is a very hard
problem. But many weaker versions of the conjecture that compare Euler
numbers, Hodge numbers, etc. has been studied extensively in the literature
in the case of algebraic orbifolds and orbifolds with global quotients. Batyrev
and Dais in \cite{[BT]}  Poddar and Lupercio  in \cite{[LP]} and Yasuda in \cite{[Yas]}, proved in particular the equality of orbifold Hodge numbers  of smooth crepant resolutions
for Gorenstein   algebraic 
orbifolds and for the non-Gorenstein algebraic case was proved by Yasuda  \cite{[Yas1]}. We generalize this correspondence to a non-algebraic non- analytic and non-global setting.
The correspondence also  
generalizes the string
theoritic Hodge number Correspondence of Batyrev and Dias to a non- algebraic non-analytic setting.

  A quasitoric orbifold (defined in \cite{[DJ]}, \cite{[PS]}) is a $2n$-dimensional differentiable orbifold equipped with
       a smooth action of the $n$-dimensional compact torus such that the orbit
       space
      is diffeomorphic as manifold with corners to a $n$-dimensional simple polytope (A $n$- dimensional polytope is called simple
      if every vertex is the intersection of exactly $n$ codimension one faces). The preimage of every codimension one face is a torus invariant
      $(2n-2)$-dimensional quasitoric orbifold
      which is stabilized by a circle subgroup of the form $\{(e^{2\pi a_{1}t}, \ldots e^{2\pi a_{n}t}): t\in \RR \}$.
       The vector $ (a_{1}, \ldots a_{n})$ is a primitive integral vector called the
characteristic
 vector associated with this  codimension one  face.  In general  a codimension $k$ face is the intersection of $k$  codimension one faces and its characteristic set consists of 
 the characteristic vectors of these codimension one faces. The characteristic set of every face is linearly independent over $\RR$.
   
   Quasitoric manifolds (and orbifolds, although not in full generality,) were introduced
in \cite{[DJ]}  . They got their present name in \cite{[BP]}. They are generalizations of
smooth projective toric varieties. They include manifolds which do not admit an
almost complex structure such as $CP^{2}\sharp CP^{2}$. A broader class of quasitoric orbifolds
were defined in \cite{[PS]}. In that paper and the subsequent papers \cite{[GP]} and \cite{[GP2]}, questions
relating to homology, cohomology, almost complex structures and equivariant blowdown
maps were addressed. McKay correspondence of Chen-Ruan cohomology was also established for four
and six dimensional quasitoric orbifolds in \cite{[GP]} and \cite{[GP2]}.
   
    Thus quasitoric orbifolds   give a vast family of orbifolds which are neither complex or almost complex.
  In this paper we address the question of Hodge structures of these orbifolds. Since $CP^{2}\sharp CP^{2}$ is a non- almost complex quasitoric orbifold it gets a Hodge structure in spite of being
   non-almost complex. This opens possibility of Hodge theory in a non almost complex setting. The idea of the proof is to relate the  cohomology of a  quasitoric orbifold
  with  the cohomology of a projectiove toric orbifold which is a Kahler orbifold . The  complex De Rham cohomology of the two spaces are shown to be isomorphic as a graded vector
  space and so
  the Hodge structure of one can be pulled back to the other.
     
     With Hodge structures possible we can define orbifold Hodge numbers and get a correspondence for these numbers under crepant blowdown by imitating Batyrev-Dias correspondence.
    
    A general quasitoric orbifold differs combinatorially from a projective toric orbifold
in the following manner. In the case of a projective toric orbifold, the characteristic
vector of a codimension one face of the orbit polytope is normal to that face. This
enables the characteristic vectors to generate cones that fit together to form a toric
fan. In a general quasitoric orbifold the normality condition is relaxed to linear independence of characteristic sets of faces. 

    The paper is organized in the following manner. In section $2$ we give a combinatorial construction of  quasitoric orbifolds. In section $3$ we discuss Betti numbers of quasitoric
   orbifolds. In section $4$ we define Hodge structures and provide a canonical Hodge structure by showing cohomological vector space isomorphism between a quasitoric and a projective
    toric. In section $5$ as an application we define and show orbifold Hodge number correspondence. In section 6 we give an argument to show every simple polytope is combinatorially
    equivalent to a rational polytope which is used to show cohomological isomorphism of a quasitoric with a projective toric.
\section{\bf{Quasitoric orbifolds } }\label{smooth}
 In this section we describe the combinatorial construction of quasitoric orbifolds. Notations established in this section will
be used later.

 Take a copy $ N$of $ \ZZ^{n}$ and form a torus  $T_N := (N \otimes_{\ZZ} \RR) / N \cong \RR^n/ N $.

 Take a submodule  $M$ of $ N$ of rank $m$ and  construct the torus  $T_M := (M \otimes_{\ZZ} \RR) / M $ of 
dimension $m$.
Define the map $\zeta_M: T_M \to T_N$ the obvious  map generated by the inclusion map $M \to N$.
\begin{defn}\label{tlambda}
 We define the image of $T_M$ under the map $\zeta_M$ as $T(M)$. If M is a sub-module of rank 1 and $\lambda$ is 
the generator then we call the image $T(\lambda)$.

\end{defn}
\begin{defn}\label{pol}
 A polytope is $P$ is a subset of $\RR^{n}$  which is diffeomorphic as manifolds with corners to a convex hull
 $C$  of a finite number of points in $\RR^{n}$. The faces of $P$ are images of faces of $C$.
\end{defn}
\begin{defn}\label{simpol}
 A simple polytope  is a polytope  where each vertex is an intersection of n co-dimension one faces which are in general
position.
\end{defn}
\begin{defn}\label{facet}
Codimension one  faces of a polytope $P$  are called facets. In a simple polytope every k dimensional face is an
intersection of n-k  facets. We call $\mathcal{F}=\{F_1,F_2 \ldots F_M\}$ the set of facets of the simple polytope P.
\end{defn}
\begin{defn}\label{charac}
   We define a map $\Lambda:\mathcal{F} \to \ZZ^{n} $ where $ F_i$ is mapped to $\Lambda(F_i)$ and  if $F_{i_1}\ldots F_{i_k}$
 intersect to form a face of the polytope $P$ then the corresponding $\Lambda(F_{i_1}) \ldots \Lambda(F_{i_k})$ are linearly
independent. From now onwards we call $\Lambda(F_i)$ as $\lambda_i$ and call it a characteristic vector and $\Lambda$ the characteristic function.
\end{defn}
\begin{remark}
In this article we consider only primitive characteristic vectors and call the  corresponding quasitoric orbifolds
as primitive quasitoric orbifolds. The codimension of  the singular locus of these  orbifolds is at least 4.
\end{remark}
\begin{defn}
 For a face $F$ define $\mathcal{I}(F)=\{i: F \subset F_i ,F_i \in \mathcal{F} \}$ . The set $\Lambda_F =\{\lambda_i : i \in \mathcal{I}(F)\}$  
is called the characteristic set of F. We call $N(F)$ be the sub module generated $\Lambda_F$. If $\mathcal{I}(F)$is
 empty $N(F)=0$ .
\end{defn}

  For any point $p$ in the polytope we denote  $F(p)$ the face whose relative interior contains $p$. We define an
equivalence relation in $P \times T_N $ where $(p,t_1)\sim (q,t_2)$  if $p=q$  and ${t_2}^{-1}t_1 \in T(N(F(p))$ 
where
$N(F(p))$ is the sub module of $N$ generated by  integral linear combinations of vectors of $\Lambda_{F(p)}$.
 The quotient space $ X = P \times T_N / \sim$  has a structure of an $2n$ dimensional orbifold and are
called quasitoric orbifolds.
   
   The pair $(P,\Lambda)$ is a model for the above space. If  vectors comprising  $\Lambda_F$ are
unimodular for all faces $F$ we get a quasitoric manifold. The  $T_N$ action on $P \times T_N $ induces a torus 
action
on the quotient space $X$, of the equivalence relation, and quotient of this action is the polytope $P$.
Let us  denote the  quotient map by  $\pi:X \to P$. ${\pi}^{-1}(w)$ for a vertex $w$ of $P$ is a fixed point of
the above action and we will  denote it by $w$  without confusion.
\subsection{Orbifold structure}
 For every  vertex $w$ in $P$ consider open set $U_w$ of  $P$ the complement  of all faces not containing the 
vertex $w$.
We define 
\begin{equation}
  X_w=\pi^{-1}(U_w)= U_w \times T_N / \sim .
 \end{equation}
 For any face $F$ containing the vertex $w$ there is a natural inclusion of $N(F)$ in $ N(w)$ and $T_{N(F)}$ in
 $ T_{N(w)}$.
We define another equivalence relation $\sim_w$ on $ U_w \times T_{N(w)}$ as follows.

   For $ p \in U_w$, let $F$ be the the face which contains $p$ in its relative interior, by definition $F$ 
contains $w$.
We define the relation as $(p,t_1) \sim_w (q,t_2)$,if $p=q$ and ${t_2}^{-1} t_1 \in T_{N(F)} $.
We define
\begin{equation}
 \tilde{X_w} =U_w \times  T_{N(w)} / \sim_w .
\end{equation}
  The above space is equivariantly diffeomorphic to an open set in $ C^{n}$ with the standard torus action on 
$C^{n}$and
$T_{N(w)}$ action on $\tilde{X_w}$. The diffeomorphism will be clear from the subsequent discussion.
    The map $\zeta_{N(w)}: T_{N(w)} \to T_N$ induces a map from $\zeta_w :\tilde{X_w} \to X_w $ in the following
way
\begin{equation}
 \zeta_w((p,t)\sim_w)=(p,\zeta_{N(w)}(t))\sim .
\end{equation}
The kernel of the map $\zeta_{N(w)}$ is $G_w=N/N(w)$ is a subgroup $T_{N(w)}$ and has a smooth  action on
 $\tilde{X_w}$ and the quotient
of this action is $X_w$. This action is not free and so $X_w$ is an orbifold and the uniformizing  chart of
$X_w$ is $(\tilde{X_w},G_w,\zeta_w)$.

   We define a homeomorphism $\phi(w):\tilde{X_w} \to \RR ^{2n} $ as follows. Assume  without loss of generality
 $F_1, F_2 \ldots F_n$ are the facets containing $w$ and $p_i(w)=0$ is the the facet  $F_i$ and in
$U_w$ $p_i^{,}s$ have non-negative values with  positive in interiors of $U_w$. Let $\Lambda_w$ be the corresponding
set of characteristic vectors represented as follows
\begin{equation}
 \Lambda_w= [\lambda_1 \ldots \lambda_n] .
\end{equation}
If $q(w)$ be the representation of the angular coordinates of $T_N$ in  the basis with respect to 
$\lambda_1 \ldots \lambda_n$ of $N \otimes_{\ZZ} \RR $. Then the standard coordinates $q$ are related in the following
manner to $q(w)$
\begin{equation}
 q= \Lambda_w q(w) .
\end{equation}
 The homeomorphism $\phi(w):\tilde{X_w} \to \RR ^{2n} $ is
\begin{equation}
x_i = x_i(w):= \sqrt{p_i (w)} \cos(2 \pi q_i(w) ), \quad
      y_i = y_i(w):= \sqrt{p_i(w) } \sin( 2 \pi q_i(w) ) \quad {\rm for}\;
      i=1,\ldots,n .
\end{equation}
We write
\begin{equation}\label{cplxcoor}
 z_i = x_i + \sqrt{-1} y_i, \quad {\rm and} \quad
z_i(w) = x_i(w) + \sqrt{-1}y_i(w).
\end{equation}

 Now consider the action of $G_w = N/N(w)$ on $\widetilde{X}_w$. An element
     $g$ of $G_w$ is represented by a vector $\sum_{i=1}^n a_i \lambda_i $ in
   $N$ where each $a_i \in  \QQ$.  The action of $g$ transforms the coordinates
   $q_i(w)$ to $q_i(w) + a_i$. Therefore
    \begin{equation}\label{action}
    g\cdot (z_1,\ldots, z_n) = (e^{2\pi \sqrt{-1} a_1} z_1,\ldots, e^{2\pi \sqrt{-1}a_n} z_n).
     \end{equation}
We define
\begin{equation}\label{gx3}
  G_F := ((N(F)\otimes_{\ZZ} \QQ) \cap N) / N(F).
\end{equation}
We denote the space X with the above orbifold structure by
$\bf{X}$.
\subsection{\bf{Invaraint Suborbifolds}} 
The $T_N$  invariant subset $\pi^{-1}(F)$  where  $F$ is a  face of $P$ is a  quasitoric orbifold. The face
$F$ acts as the polytope of $\bf{X}(F)$ and the  characteristic vectors are obtained by  projecting
characteristic vectors of $\bf{X}$ to $N/\tilde{N(F)}$ where $\tilde{N(F)}=N(F)\otimes_{\ZZ} \QQ \cap N$.
With this structure $\bf{X}(F)$ is a suborbifold of $\bf{X}$. The suborbifolds corresponding to the facets are called
characteristic suborbifolds. We denote the interior of a face by $ F^{\circ}$. The interior of a vertex $w^{\circ}$
is $w$.

\subsection{\bf{Orientation}}
Quasitoric orbifolds are oriented. For more detailed discussion see  section  2.8 of \cite{[GP2]}. A choice of 
orientation
 of $T_N$  and a  choice of orientation of the polytope $P$ induces an orientation of the  quasitoric orbifold $\bf{X}$.
\subsection{\bf{Omniorientation}}
 A choice of  orientations of the normal bundles of the orbifolds corresponding to the facets (which we named as characteristic suborbifolds)
is termed as fixing an omniorientation. This is equivalent to fixing the sign of the characteristic vector
 associated to the facet (note:we call co-dimension one faces as facets). A quasitoric orbifold with  a fixed omniorientation
 is called and {\bf omnioriented quasitoric orbifold}. A quasitoric orbifold is    {\bf positively omnioriented} 
if it has an omniorientation such that for every
vertex $w$ ,$\Lambda_w$ has a positive determinant. For more detailed discussion see  %
section  2.9  of \cite{[GP2]}.%
\section{\bf{Betti numbers of a Quasitoric Orbifold}}
Poddar and Sarkar computed the $\QQ$ homology and cohomology of  quasitoric orbifolds  in \cite{[PS]}. In particular
 the computation of homology in Section $4$ of \cite{[PS]}   gives a strong connection between the combinatorics of
the polytope $P$ and the Betti numbers. We discuss the connection in following  proposition.
\begin{proposition}
 Quasitoric orbifolds with  combinatorially equivalent polytopes have same Betti numbers .
\end{proposition}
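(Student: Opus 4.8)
The plan is to reduce the statement to a purely combinatorial fact about the polytope by invoking the explicit homology computation of Poddar and Sarkar. Their calculation in Section $4$ of \cite{[PS]}, to which the remark preceding the proposition already points, expresses the rational homology of a quasitoric orbifold $\mathbf{X}$ over a simple polytope $P$ entirely in terms of $P$: the homology is concentrated in even degrees, so all odd Betti numbers vanish, and the even Betti numbers coincide with the components of the $h$-vector of $P$, say $b_{2i}(\mathbf{X}) = h_i$. So the first step is simply to record this formula in the form in which it appears in \cite{[PS]}.

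Next I would make the combinatorial input explicit. Writing $f_j$ for the number of $j$-dimensional faces of $P$ (with the convention $f_{-1}=1$), the $h$-vector $(h_0,\ldots,h_n)$ is recovered from the $f$-vector by the standard relation $\sum_{i=0}^{n} h_i\, t^{n-i} = \sum_{i=0}^{n} f_{i-1}(t-1)^{n-i}$. The point I want to isolate and stress is that the right-hand side, and hence every $h_i$, depends on $P$ only through its face-counting data; in particular it does not see the characteristic function $\Lambda$, nor the geometric realization of $P$, but only the face lattice.

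The final step is the observation that a combinatorial equivalence of polytopes is, by definition, an isomorphism of their face lattices. Consequently two combinatorially equivalent polytopes $P$ and $P'$ have identical face counts in every dimension, $f_j(P)=f_j(P')$, and therefore identical $h$-vectors. Feeding this back into the Poddar--Sarkar formula yields $b_k(\mathbf{X}) = h_{k/2} = b_k(\mathbf{X}')$ for even $k$ and $b_k(\mathbf{X}) = 0 = b_k(\mathbf{X}')$ for odd $k$, whenever $\mathbf{X}$ and $\mathbf{X}'$ are quasitoric orbifolds over $P$ and $P'$ respectively, which is exactly the assertion.

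The main obstacle I anticipate is not the combinatorial bookkeeping, which is routine, but rather confirming that the computation of \cite{[PS]} genuinely factors through the combinatorial type of $P$ alone. Concretely, one must revisit their argument --- organized through a decomposition indexed by the vertices of $P$ relative to a generic linear functional --- and check that, rationally, the finite isotropy data and the characteristic function $\Lambda$ contribute no change to the count producing each Betti number, and that the result is independent of the chosen functional. Once this independence is in hand, the proposition is immediate.
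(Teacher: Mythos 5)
Your argument is correct, but it follows a genuinely different route from the paper's. Both proofs rest on the same input from \cite{[PS]}: rationally, the odd Betti numbers vanish and $b_{2k}$ equals the number of vertices of index $k$ with respect to a height function on $P$. The paper finishes by \emph{transporting the height function}: since a combinatorial equivalence is (in this paper's sense) a diffeomorphism of manifolds with corners, composing a height function on one polytope with that diffeomorphism gives a height function on the other with literally identical vertex indices, whence equal Betti numbers; this is why the paper insists that the \cite{[PS]} computation works for any function satisfying its listed properties (1)--(5), not only for linear functionals, because the transported function is no longer linear. You instead keep a linear functional on each polytope and invoke the classical identity that the index counts are the $h$-vector, which is determined by the $f$-vector and hence by the face lattice alone. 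What your route buys: the independence of the chosen functional and of the characteristic data, which you flag as the remaining obstacle, is not really an obstacle at all --- it is exactly the standard fact that the $h$-vector of a simple polytope is well defined (and \cite{[PS]} already shows $\Lambda$ plays no role rationally), so no re-examination of the cell structure for non-linear height functions is needed. What the paper's route buys: it bypasses the $f$/$h$-vector formalism entirely and makes the equality of index counts true by construction. One slip you should fix: with your stated convention that $f_j$ counts the $j$-dimensional faces of the simple polytope $P$, the relation $\sum_{i=0}^{n} h_i t^{n-i} = \sum_{i=0}^{n} f_{i-1}(t-1)^{n-i}$ fails for $n \ge 3$; for the $3$-cube it yields $(1,5,-1,1)$ rather than $(1,3,3,1)$. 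The indices must be read dually: $f_{i-1}$ should count the codimension-$i$ faces of $P$, i.e. the $(i-1)$-dimensional faces of the dual simplicial polytope. This does not affect your logic, since under either convention the $h$-vector depends only on face counts and hence only on the combinatorial type.
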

\begin{proof}
 A  brief discussion of the homology computation in  \cite{[PS]}  is required to establish the above proposition.
The computation depends on defining a continuous height function on the polytope $P$ with following properties.
\begin{enumerate}{\label{list}}
 \item Distinguishes vertices.
\item Strictly  increases or decreases on  edges. 
\item Each face has a unique maximum and minimum vertex.
\item The maximum vertex is the unique vertex of the face where all the edges of the face meeting the vertex has 
a maximum on the vertex.
\item The minimum  vertex is the unique vertex of the face where all the edges of the face meeting  the vertex 
has a minimum on the vertex.
\end{enumerate}
 A vertex distinguishing  linear functional  of $\RR^{n}$  does the job. Here we assume $P$ is embedded in  $\RR^{n}$.
Once we have such a function we orient the edges of the polytope in  increasing direction of the height function and
 arrange the {\bf vertices in increasing order of height}.
We define  index $i_w$ of  a vertex $w$ as the number of incoming edges. The smallest face containing these 
incoming edges is  the largest 
face $F_w$ which
has $w$ as the  maximum vertex. Now start attaching $2i_w$ $q$-cells  following the increasing order 
of vertices. The $q-$ cell
covers the  entire  inverse image of $F_w$ in the orbifold. For definition and  description of $q$-cells and 
the attaching maps we ask the  {\bf reader to consult} 
 \cite{[PS]}.

   Now each face  has a unique maximum vertex $w$ and interior of the face will be contained in $F_w$ by
 points
$3$ and $4$ above. So each face gets covered and each point  in the orbifold is in the  interior of exactly 
 one $q$-cell. Considering 
the polytope as a face there will be exactly one 0 $q$-cell and
one $2n$ $q$-cell. Thus we get a $q$ cellular decomposition of the quasitoric orbifold.

   Now it is shown in \cite{[PS]} that the $2k$ Betti numbers   depends on  the number of vertices with index 
$k$ while
the odd Betti numbers are zero. Now if we have two quasitoric orbifolds with two combinatorially equivalent 
polytopes(which means they are diffeomorphic as manifold with  corners) the height function of one
composed with the diffeomorphism gives a height function of the other with identical vertex indices. Thus
their Betti numbers will be same by what is done in \cite{[PS]}.
\end{proof}
\begin{corollary}\label{iden}
 The  dimension of each degree of $\QQ,\RR$ and $\CC$  singular  cohomology of a
quasitoric orbifolds $\bf{X}$ and $\bf{X^{'}}$ with combinatorially equivalent polytopes are same .
\end{corollary}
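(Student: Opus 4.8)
The plan is to deduce the corollary directly from the preceding proposition by means of the universal coefficient theorem, the point being that for coefficient fields of characteristic zero the dimension of cohomology in each degree is nothing but the corresponding Betti number. The preceding proposition already supplies the equality of Betti numbers for $\mathbf{X}$ and $\mathbf{X}'$; what remains is the purely homological-algebraic observation that passing from integral homology to $\QQ$-, $\RR$- or $\CC$-coefficients does not change these dimensions.

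Concretely, I would first note that $\mathbf{X}$ is compact, so its integral singular homology is finitely generated in each degree. For any field $\mathbb{F}$ of characteristic zero, $\mathbb{F}$ is a flat $\ZZ$-module, hence torsion-free, so the Tor term in the universal coefficient theorem vanishes and
\begin{equation}
\dim_{\mathbb{F}} H_k(\mathbf{X};\mathbb{F}) = \dim_{\mathbb{F}}\big(H_k(\mathbf{X};\ZZ)\tensor_{\ZZ}\mathbb{F}\big) = \mathrm{rank}_{\ZZ}\, H_k(\mathbf{X};\ZZ) = b_k(\mathbf{X}),
\end{equation}
the $k$-th Betti number, independently of whether $\mathbb{F}$ is $\QQ$, $\RR$ or $\CC$. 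Dually, since over a field cohomology is the $\mathbb{F}$-linear dual of homology, the universal coefficient theorem for cohomology gives $\dim_{\mathbb{F}} H^k(\mathbf{X};\mathbb{F}) = \dim_{\mathbb{F}} H_k(\mathbf{X};\mathbb{F}) = b_k(\mathbf{X})$ in every degree. I would remark that this is even more transparent in the present setting: the $q$-cell decomposition constructed in the proof of the proposition has cells only in even dimensions, so all cellular boundary maps vanish and $H_*(\mathbf{X};\ZZ)$ is in fact free abelian with no torsion at all.

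Finally I would apply the preceding proposition to the combinatorially equivalent polytopes of $\mathbf{X}$ and $\mathbf{X}'$, giving $b_k(\mathbf{X}) = b_k(\mathbf{X}')$ for every $k$. Substituting into the two chains of equalities above yields $\dim_{\mathbb{F}} H^k(\mathbf{X};\mathbb{F}) = \dim_{\mathbb{F}} H^k(\mathbf{X}';\mathbb{F})$ for each degree $k$ and each $\mathbb{F}\in\{\QQ,\RR,\CC\}$, which is exactly the claim. There is no serious obstacle here; the statement is essentially a repackaging of the proposition, and the only thing to keep in mind is the flatness of characteristic-zero fields over $\ZZ$, which is what guarantees that the three coefficient choices all return the same Betti number.
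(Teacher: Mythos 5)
Your proof is correct and takes essentially the same route as the paper, whose entire proof is the phrase ``By Universal coefficient theorem'' applied to the preceding proposition on Betti numbers; your write-up simply makes explicit the flatness/Tor-vanishing and the duality over a field that the paper leaves implicit. One caution: your aside claiming $H_*(\mathbf{X};\ZZ)$ is free abelian because the $q$-cell decomposition has only even-dimensional cells is not justified---$q$-cells are quotients of disks by finite group actions rather than honest cells, so the integral cellular chain argument does not apply (the $q$-cell structure of \cite{[PS]} computes $\QQ$-homology only)---but this remark is inessential and the rest of your argument stands without it.
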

\begin{proof}
 By Universal coefficient theorem.
\end{proof}
\begin{corollary}\label{sim}
 The  dimension of each degree of $\QQ,\RR$ and $\CC$  singular  cohomology of a
quasitoric orbifold $\bf{X}$  is identical with a projective toric orbifold $X^{'}$ .
\end{corollary}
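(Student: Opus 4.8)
The plan is to reduce the statement to Corollary \ref{iden} by exhibiting, for a given quasitoric orbifold $\mathbf{X}$, a projective toric orbifold $X'$ whose orbit polytope is combinatorially equivalent to that of $\mathbf{X}$. Once such an $X'$ is in hand, Corollary \ref{iden} (applied with $\mathbf{X}'$ taken to be the projective toric orbifold $X'$, which is itself an instance of a quasitoric orbifold) immediately yields the equality of the dimensions of the $\QQ$, $\RR$ and $\CC$ cohomology in each degree. Thus the entire content of the corollary is the \emph{construction} of a projective toric orbifold sitting over a polytope combinatorially equivalent to the orbit polytope $P$ of $\mathbf{X}$.

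To build $X'$ I would proceed in two steps. First, let $P$ be the simple polytope underlying $\mathbf{X}$. I would invoke the fact, established in Section $6$, that every simple polytope is combinatorially equivalent to a \emph{rational} simple polytope $P'$, i.e.\ one whose facets lie on rational supporting hyperplanes. Second, from $P'$ I would manufacture the standard projective toric orbifold: for each facet $F'_i$ of $P'$ let $\lambda'_i \in \ZZ^n$ be the primitive integral normal to $F'_i$. Because $P'$ is simple, the $n$ facets meeting at any vertex are in general position, so their normals are linearly independent; more generally the normals of the facets through any given face are linearly independent. Hence $(P',\Lambda')$ with $\Lambda'(F'_i) = \lambda'_i$ is a legitimate characteristic pair in the sense of Definition \ref{charac}, and the associated space $X' = P' \times T_N/\!\sim$ is a quasitoric orbifold. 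Since the characteristic vectors are exactly the primitive facet normals of the rational polytope $P'$, the cones they generate assemble into the normal fan of $P'$; the resulting orbifold is precisely the projective toric orbifold defined by $P'$, a Kahler orbifold. (The normals being primitive, each characteristic suborbifold is unimodular in codimension two, so the singular locus has codimension at least four, in accordance with the standing primitivity convention.)

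With this $X'$ in place the conclusion is immediate: $P'$ is combinatorially equivalent to $P$, so $\mathbf{X}$ and $X'$ are two quasitoric orbifolds over combinatorially equivalent polytopes, and Corollary \ref{iden} gives that the dimensions of their $\QQ$, $\RR$ and $\CC$ singular cohomology coincide in every degree.

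The main obstacle in this scheme is the input from Section $6$, namely the rationality statement: given an arbitrary simple polytope one must perturb its supporting hyperplanes to rational ones while preserving the entire face lattice. A naive perturbation can create or destroy incidences, so the argument must move the hyperplanes within the open region where the combinatorial type is locally constant and then use a density argument to land on rational data. Everything else---the verification that the facet normals of a simple polytope satisfy the linear-independence condition of Definition \ref{charac}, that the normal-fan construction reproduces the quasitoric quotient $P' \times T_N/\!\sim$, and the final appeal to Corollary \ref{iden}---is routine.
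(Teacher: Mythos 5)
Your proposal is correct and follows essentially the same route as the paper: perturb $P$ to a combinatorially equivalent rational simple polytope (the appendix/Section 6 argument), build the projective toric orbifold from its normal fan, and conclude via Corollary \ref{iden}. The only cosmetic difference is that the paper additionally dilates the rational polytope to an integral one before taking the normal fan (which it wants later for the K\"ahler structure via Lerman--Tolman), but since dilation does not change the normal fan this does not affect the argument for this corollary.
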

\begin{proof}
  Take a  quasitoric orbifold ${\bf X}$.  A slight perturbation makes the polytope $P$ associated with 
the orbifold into a rational polytope (see section 5.1.3.in \cite{[BP]} or see appendix) without  changing its combinatorial class,
  and with suitable dilations makes it into an
 integral polytope $P^{'}$ which is combinatorially equivalent to $P$. Now from $P^{'}$ taking normal  fan we get a projective
toric orbifold $X{'}$(the analytic structure determines the
 orbifold structure so we do not use the bold notation) with polytope $P^{'}$.
Since polytope $P$ and polytope $P^{'}$ are combinatorially equivalent by \eqref{iden} the above holds.
\end{proof}
\begin{corollary}\label{iso}
 Each degree of the cohomology of the two spaces are isomorphic.
\end{corollary}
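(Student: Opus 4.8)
The plan is to note that Corollary~\ref{sim} has already carried out all of the genuine work, and that the present statement is a purely formal consequence of it together with elementary linear algebra. First I would fix a degree $k$ and one of the coefficient fields $\QQ$, $\RR$, or $\CC$. Since $\bf{X}$ and the projective toric orbifold $X'$ are compact, each singular cohomology group $H^k(\bf{X})$ and $H^k(X')$ with coefficients in that field is a finite-dimensional vector space over it. Corollary~\ref{sim} asserts precisely that these two vector spaces have the same dimension. The classical fact that two finite-dimensional vector spaces over a common field are isomorphic if and only if they have equal dimension then furnishes a linear isomorphism $H^k(\bf{X}) \cong H^k(X')$ in each degree $k$.

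Assembling these degreewise isomorphisms over all $k$ produces an isomorphism of the total cohomology as a graded vector space, which is the content of the corollary. The step I would flag as requiring care is not a difficulty of proof but of interpretation: the isomorphism is obtained from a bare dimension count and is therefore highly non-canonical, and in particular it is not asserted to intertwine the cup products or to be a ring homomorphism. Only the graded vector-space structure is claimed here.

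This is exactly what the subsequent construction needs. As indicated in the introduction, the role of Corollary~\ref{iso} is to transport the Hodge decomposition of the K\"ahler orbifold $X'$ back to $\bf{X}$, and for that it suffices to have an isomorphism of graded vector spaces along which the Hodge filtration of $X'$ can be pulled back. The entire analytic and combinatorial substance---the perturbation of $P$ to a rational, and then by dilation to an integral, polytope in the same combinatorial class, and the passage to the associated normal fan---is packaged inside Corollaries~\ref{iden} and~\ref{sim}, which I take as given; consequently no further obstacle arises at this final step, and the proof reduces to the dimension-count argument above.
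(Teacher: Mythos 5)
Your proposal is correct and coincides with the paper's own proof: both invoke Corollary~\ref{sim} and the elementary fact that finite-dimensional vector spaces over the same field of equal dimension are isomorphic, yielding the (non-canonical) isomorphisms $J_k$ degree by degree. Your additional remarks on non-canonicity and on the role of these isomorphisms in transporting the Hodge structure are consistent with how the paper later uses them.
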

\begin{proof}
Since they have the same dimension  and the dimensions are finite so the vector spaces are isomorphic. We define 
the isomorphisms as
$J_k$ where $k$ is the degree of the cohomology.
\end{proof}
\section{\bf{Hodge Structure}}
\begin{defn}
 A pure Hodge structure of weight n consists of an Abelian group $H_K$ and a decomposition of its complexification
into  complex subspaces $H^{p,q}$ where $p+q=n$ with the property  conjugate of $H^{p,q}$ is $H^{q,p}$.
\begin{equation}
 H_C = H_K \otimes_{\ZZ} \CC = \bigoplus_{p+q=n} H^{p,q} .
\end{equation}
and
\begin{equation}
\overline{H^{p,q}} = H^{q,p} .
\end{equation}
\end{defn}
\begin{defn}
  By a  Hodge structure on a compact space we mean the  singular cohomology group of degree $k$ has a pure
Hodge structure of weight $k$ for all $k$.
\end{defn}

\begin{proposition}\label{kahler}
 Kahler compact orbifolds have a canonical Hodge structure.
\end{proposition}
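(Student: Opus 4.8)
The plan is to establish the orbifold analogue of the classical Hodge decomposition for compact Kähler manifolds and then to transport it to singular cohomology, which is the group appearing in the definition of a Hodge structure. First I would recall that on a complex orbifold there is a well-defined sheaf of orbifold differential forms, and that on a Kähler orbifold the integrable almost complex structure splits the complexified cotangent bundle into holomorphic and antiholomorphic parts, so that each space of $k$-forms decomposes by $(p,q)$-type. This decomposition is preserved by the local finite uniformizing group actions because those actions are biholomorphic, hence it descends to orbifold forms. Working in local uniformizing charts with invariant forms, the Hodge--de Rham Laplacian $\Delta$ is elliptic, and the harmonic theory of elliptic operators extends to compact orbifolds (V-manifolds). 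This is precisely the content of Baily's decomposition theorem for V-manifolds: for a compact Kähler orbifold the complex orbifold de Rham cohomology admits a decomposition $H^k_{dR}(\mathbf{X},\CC)=\bigoplus_{p+q=k}\mathcal{H}^{p,q}$ into spaces of harmonic $(p,q)$-forms, and the three Laplacians $\Delta_d$, $\Delta_{\partial}$, $\Delta_{\bar\partial}$ coincide up to a constant exactly as in the smooth case.

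Second, I would verify the two formal properties needed for a weight-$k$ Hodge structure. Since complex conjugation interchanges $(p,q)$- and $(q,p)$-forms and commutes with $\Delta$, it carries $\mathcal{H}^{p,q}$ isomorphically onto $\mathcal{H}^{q,p}$, giving $\overline{\mathcal{H}^{p,q}}=\mathcal{H}^{q,p}$. To supply the underlying abelian group with its integral (or at least rational) lattice, I would invoke Satake's de Rham theorem for V-manifolds, which gives a natural isomorphism between the orbifold de Rham cohomology of a compact orbifold and its singular cohomology with real, and after complexification complex, coefficients, so that $H^k(\mathbf{X},\CC)\cong H^k(\mathbf{X},\ZZ)\otimes_{\ZZ}\CC$ inherits the decomposition above with $H^k(\mathbf{X},\ZZ)$ modulo torsion as the group $H_K$. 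This yields on each singular cohomology group $H^k$ a pure Hodge structure of weight $k$, which is what the definition requires.

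Finally, I would address canonicity, since the harmonic decomposition a priori depends on the chosen Kähler metric. The standard remedy is to identify the $(p,q)$-piece $\mathcal{H}^{p,q}$ with the Dolbeault cohomology $H^q(\mathbf{X},\Omega^p)$ of the sheaf of holomorphic orbifold $p$-forms, exactly as in the manifold case; as Dolbeault cohomology is defined with no reference to a metric, the resulting Hodge filtration is intrinsic and hence canonical. I expect the main obstacle to lie in the analytic input rather than in this formal bookkeeping, namely in justifying elliptic Hodge theory on orbifolds --- finite dimensionality of the harmonic spaces, existence of a unique harmonic representative in each class, and the Kähler identities --- in the presence of the orbifold singularities. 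The cleanest route is to quote Baily's extension of Hodge theory to V-manifolds and Satake's de Rham comparison, rather than to redevelop elliptic regularity on the uniformizing charts by hand.
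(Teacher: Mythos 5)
Your proposal is correct and takes essentially the same route as the paper, whose entire proof is the citation ``By Baily's Hodge decomposition see \cite{[WB]}''; you have simply unpacked what that citation supplies (harmonic theory on V-manifolds, the conjugation symmetry, the Satake--de Rham comparison with singular cohomology, and metric-independence via Dolbeault). The extra detail is sound and fills in steps the paper leaves implicit, but the key idea --- quoting Baily's decomposition theorem for compact K\"ahler V-manifolds --- is identical.
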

\begin{proof}
 By Baily's Hodge decomposition see \cite{[WB]}.
\end{proof}

\begin{proposition}
 Projective toric orbifolds coming from integral simple polytopes are Kahler.
\end{proposition}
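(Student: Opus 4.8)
The plan is to produce a Kähler form on $X'$ by realizing $X'$ as a projective subvariety, in the orbifold sense, of some $\CP^{N}$ and then restricting the Fubini--Study form. First I would record the structural facts. Since the polytope $P'$ is simple and integral, its normal fan $\Sigma$ is a \emph{complete} simplicial fan: each maximal cone is generated by a linearly independent set of primitive lattice vectors, so the associated affine chart of $X'$ has the form $\CC^{n}/G$ for a finite abelian group $G$. Completeness of $\Sigma$ makes $X'$ compact, and the simplicial (hence quotient-singularity) condition makes $X'$ a compact complex orbifold, with uniformizing charts $\zeta_{w}\colon \widetilde U \to U = \widetilde U/G$ where $\widetilde U$ is open in $\CC^{n}$ and $G$ acts holomorphically.

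Next I would build the projective embedding. Because $P'$ is integral, a sufficiently divisible dilate $mP'$ is very ample, and its lattice points $\{u_{0},\ldots,u_{N}\} = mP' \cap M^{\vee}$ give a holomorphic map $X' \to \CP^{N}$, $x \mapsto [\chi^{u_{0}}(x):\cdots:\chi^{u_{N}}(x)]$, realizing $X'$ as a projective toric subvariety. I would then check that this is an orbifold embedding: on each chart the composite $\widetilde U \to U \hookrightarrow \CP^{N}$ is a $G$-invariant holomorphic immersion, since the monomial map is transparent in the Cox homogeneous-coordinate description and $G$ acts through the residual torus action compatibly with the ambient torus action on $\CP^{N}$.

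With the embedding $\iota$ in hand, I would pull back the Fubini--Study Kähler form $\omega_{FS}$ on $\CP^{N}$. On each chart $\widetilde{\omega} := (\iota \circ \zeta_{w})^{*}\omega_{FS}$ is a closed, $G$-invariant, positive $(1,1)$-form on $\widetilde U$, i.e. a local Kähler form; because the charts are compatible on overlaps and $\omega_{FS}$ is globally defined on $\CP^{N}$, these local forms patch to a global orbifold Kähler form on $X'$. Invoking Proposition~\ref{kahler} then equips $X'$ with its canonical Hodge structure, but the statement to be proved is precisely that $X'$ is a compact Kähler orbifold, which this yields.

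The hard part will be the positivity and nondegeneracy of $\widetilde{\omega}$ on the charts, that is, showing the orbifold embedding is an immersion in the uniformized sense so that $\widetilde{\omega}$ is genuinely positive definite rather than merely semi-positive. I would address this using that $mP'$ very ample separates points and tangent directions of $X'$ as a variety, and lift this to the charts via the monomial coordinates, where injectivity of the differential is direct. An alternative that sidesteps the embedding is the Kähler-quotient (Delzant--Lerman--Tolman) route: realize $X'$ as $\CC^{d}/\!/K$ for the subtorus $K$ determined by $P'$, so that the Kähler quotient of flat $\CC^{d}$ inherits a Kähler structure and is identified with $X'$; there the obstacle migrates to verifying that reduction at the non-free (orbifold) points still outputs a Kähler orbifold, which is exactly the Lerman--Tolman symplectic-toric-orbifold construction upgraded by the compatible complex structure.
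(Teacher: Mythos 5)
Your fallback suggestion in the last sentence is in fact the paper's entire proof: the paper disposes of this proposition in one line by citing Theorems 8.1, 9.1 and 9.2 of \cite{[LT]}, i.e.\ the Lerman--Tolman realization of the toric orbifold of a labeled polytope as a symplectic (K\"ahler) quotient of $\CC^{d}$ by a subtorus, where the non-free reduction points are exactly what that paper handles. Your primary route, however, has a genuine gap, and it sits precisely at the step you flag as hard. The claim that the composite $\widetilde U \to U \hookrightarrow \CP^{N}$ is an immersion is false at every point with nontrivial isotropy: since this composite is $G$-invariant, its differential at a fixed point $p$ of $g \in G$ satisfies $d(\iota\circ\zeta_{w})_{p}\circ (dg)_{p} = d(\iota\circ\zeta_{w})_{p}$, hence it annihilates every eigendirection of $(dg)_{p}$ with eigenvalue $\neq 1$. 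Concretely, for the $A_{1}$ singularity $\CC^{2}/\ZZ_{2}$ the monomial (very ample) embedding lifts to the chart map $(u,v)\mapsto [1:u^{2}:v^{2}:uv]$, whose differential vanishes identically at the origin; the pullback of $\omega_{FS}$ is therefore only semi-positive there, and no choice of $m$ helps. Your proposed repair via ``$mP'$ very ample separates points and tangent directions'' does not close this: separation of tangent directions is a statement about the Zariski tangent space of the image variety, and it does not lift to injectivity of the differential in uniformizing coordinates --- indeed it cannot, by the equivariance obstruction just described.

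The honest repair of the embedding route is not a bare pullback argument: one takes the semi-positive global pullback, and on linearized uniformizing charts (where $G$ acts unitarily) glues in flat $G$-invariant K\"ahler forms via $i\partial\bar\partial$ of compactly supported potentials, then adds a sufficiently large multiple of the global pullback so that the sum is positive on every chart; this is essentially how one proves that projective varieties with quotient singularities are K\"ahler orbifolds (in the spirit of Baily's work cited as \cite{[WB]} in the paper). That argument is workable but substantially longer than what you wrote, and the positivity bookkeeping is exactly the content you postponed. By contrast, the Lerman--Tolman route you mention as an alternative requires no new work at all --- the compatible complex structure and the K\"ahler reduction at orbifold points are theorems of \cite{[LT]} --- which is why the paper takes it.
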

\begin{proof}
 By theorem 8.1  9.1 and 9.2  in \cite{[LT]}.
\end{proof}

\begin{corollary}
 Projective toric orbifolds coming from integral simple polytopes have a canonical Hodge structure.
\end{corollary}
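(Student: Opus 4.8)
The plan is to chain together the two propositions immediately preceding this corollary, the only additional ingredient being compactness. First I would observe that a projective toric orbifold associated to an integral simple polytope is compact: being projective, it sits as a closed subspace of a weighted projective space (or of some $\CP^N$), hence is compact; equivalently, since the defining polytope is bounded, the associated toric orbifold is complete and therefore compact. This is the only hypothesis of Proposition \ref{kahler} that is not already handed to us by the Kähler proposition, so it is worth recording explicitly.

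Next, by the proposition just above, a projective toric orbifold coming from an integral simple polytope is Kähler. Combining this with the compactness just noted, such an orbifold is a compact Kähler orbifold, so that Proposition \ref{kahler} applies verbatim.

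Finally, I would invoke Proposition \ref{kahler}: every compact Kähler orbifold carries a canonical Hodge structure, obtained from Baily's Hodge decomposition on the complexified singular cohomology. Applying this to our compact Kähler projective toric orbifold produces the canonical Hodge structure asserted, and the proof is complete. There is no genuine obstacle here, since the statement is a formal composition of the two cited results; the only point demanding even a line of justification is compactness, which is immediate from projectivity (equivalently, from the boundedness of the polytope).

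\qed
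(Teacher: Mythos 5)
Your proposal is correct and follows exactly the route the paper intends: the corollary is a formal composition of the two preceding propositions (projective toric orbifolds from integral simple polytopes are K\"ahler, and compact K\"ahler orbifolds carry a canonical Hodge structure via Baily's decomposition), which is why the paper states it without further proof. Your explicit remark that compactness holds -- needed because Proposition \ref{kahler} is stated for compact K\"ahler orbifolds -- is a sensible bit of care the paper leaves implicit, but it does not change the argument.
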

\begin{defn}
 Let $H^{p,q}$ be the $(p,q)$ Hodge component  of the canonical Hodge structure on 
Projective toric orbifolds $X^{'}$  coming from integral  simple polytopes.
We define 
\begin{equation}
 H^{p,q}(X^{'})=H^{p,q}.
\end{equation}
and
\begin{equation}
 h^{p,q}(X^{'})=dim(H^{p,q}(X^{'})).
\end{equation}
\end{defn}

  Let $\bf{X}$ be a  quasitoric orbifold and $X^{'}$ be the projective toric orbifold whose integral simple polytope $P^{'}$
 is combinatorially  equivalent to the polytope $P$ of $\bf{X}$. We assign

\begin{equation}
 H^{p,q}({\bf X})=J_k(H^{p,q}(X^{'})). 
\end{equation}
where $p+q=k$ and $ J_k$ is the isomorphisms of the degree $k$ cohomologies
 defined in \eqref{iso}.

\begin{theorem}
 The above assignment defines a Hodge structure on $\bf{X}$ depending on $J_k$. For independence of $J_k$ see \ref{can}.
\end{theorem}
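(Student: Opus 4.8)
The plan is to verify directly the two defining axioms of a pure Hodge structure of weight $k$ on $H^k(\mathbf{X})$, for every $k$, by transporting the corresponding axioms from $H^k(X')$ across the isomorphism $J_k$ of Corollary \ref{iso}. Recall that the definition requires (i) a direct sum decomposition $H^k(\mathbf{X};\CC) = \bigoplus_{p+q=k} H^{p,q}(\mathbf{X})$ of the complexified cohomology, and (ii) the conjugation symmetry $\overline{H^{p,q}(\mathbf{X})} = H^{q,p}(\mathbf{X})$. Since $X'$ is a projective toric orbifold from an integral simple polytope, it is Kahler and hence carries the canonical Hodge structure of Proposition \ref{kahler}, so both axioms hold on $X'$ and are available to push forward.

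First I would dispose of the decomposition axiom (i), which is essentially formal. Because $J_k$ is a linear isomorphism of the degree-$k$ complex cohomologies, it carries the internal direct sum $\bigoplus_{p+q=k} H^{p,q}(X')$ to the internal direct sum $\bigoplus_{p+q=k} J_k(H^{p,q}(X')) = \bigoplus_{p+q=k} H^{p,q}(\mathbf{X})$; the images remain independent and together span $H^k(\mathbf{X};\CC) = J_k(H^k(X';\CC))$. Thus (i) holds for any choice of linear isomorphism $J_k$, and no further work is needed here.

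The main point, and the only place where care is required, is the conjugation axiom (ii). Complex conjugation on $H^k(\mathbf{X};\CC)$ is defined relative to the real subspace $H^k(\mathbf{X};\RR)$, and an arbitrary complex-linear $J_k$ need not intertwine the conjugations on the two spaces. The remedy is to exploit Corollary \ref{sim}: the real cohomologies $H^k(X';\RR)$ and $H^k(\mathbf{X};\RR)$ have the same finite dimension in every degree. I would therefore choose $J_k$ to be the complexification of a real linear isomorphism $H^k(X';\RR) \to H^k(\mathbf{X};\RR)$. Such a $J_k$ maps the real subspace onto the real subspace, hence commutes with complex conjugation, $\overline{J_k(v)} = J_k(\overline{v})$, and the conjugation axiom transfers cleanly,
\begin{equation}
 \overline{H^{p,q}(\mathbf{X})} = \overline{J_k(H^{p,q}(X'))} = J_k(\overline{H^{p,q}(X')}) = J_k(H^{q,p}(X')) = H^{q,p}(\mathbf{X}),
\end{equation}
where the middle equality is the conjugation symmetry of the Hodge structure on $X'$.

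Finally I would observe that for every odd $k$ both $H^k(\mathbf{X};\CC)$ and $H^k(X';\CC)$ vanish by the Betti number computation of Section $3$, so the axioms hold trivially there, and that the abelian group underlying the structure is $H^k(\mathbf{X};\ZZ)$ (equivalently $H^k(\mathbf{X};\QQ)$), whose complexification is $H^k(\mathbf{X};\CC)$. The expected obstacle is precisely the realness of $J_k$ in axiom (ii): everything hinges on selecting the isomorphism of Corollary \ref{iso} so that it is defined over $\RR$, which the equal real dimensions guarantee is possible. Because distinct admissible choices of $J_k$ yield a priori different subspaces $H^{p,q}(\mathbf{X})$, the resulting structure genuinely depends on $J_k$, and I would defer the canonicity (independence of this choice) to the discussion \ref{can}, exactly as the statement indicates.
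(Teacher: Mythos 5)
Your proposal is correct, and it is in fact more careful than the paper's own argument, which disposes of the theorem in one line (``By above and corollary \ref{sim}'') and never addresses the point you correctly identify as the crux. Transporting the direct sum decomposition across any complex-linear isomorphism $J_k$ is formal, exactly as you say; but the conjugation axiom $\overline{H^{p,q}} = H^{q,p}$ does \emph{not} transfer across an arbitrary complex-linear $J_k$, since conjugation on $H^k(\mathbf{X};\CC)$ is defined relative to the real subspace $H^k(\mathbf{X};\RR)$ and a generic $J_k$ need not intertwine the two conjugations. Your repair --- using Corollary \ref{sim} to choose $J_k$ as the complexification of a real isomorphism $H^k(X';\RR)\to H^k(\mathbf{X};\RR)$ --- closes this gap at the point where the theorem is stated. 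The paper's one-liner can only be defended by a forward reference: two theorems later it is shown that $h^{p,q}(X')=0$ for $p\neq q$, so the transported decomposition in degree $2p$ consists of the single summand $J_{2p}(H^{p,p}(X'))=H^{2p}(\mathbf{X};\CC)$, which is conjugation-stable for trivial reasons, making the axiom hold for \emph{any} $J_k$; this is also what ultimately makes the structure canonical in \ref{can}. So the two routes differ in what they buy: your real-$J_k$ argument is self-contained and works for transporting any pure Hodge structure between spaces with isomorphic real cohomology, while the paper's (implicit) argument leans on the special diagonal form of the toric Hodge structure and only becomes airtight once the Hodge number computation is in hand. Your final remark that the structure genuinely depends on the choice of (real) $J_k$ a priori, with canonicity deferred, matches the paper's logic exactly.
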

\begin{proof}
 By above and corollary \eqref{sim}.
\end{proof}
\begin{theorem}
 The assignment does not depend on $X^{'}$.
\end{theorem}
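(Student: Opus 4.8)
The plan is to show that the Hodge structure placed on ${\bf X}$ is in fact forced by the cohomological degree alone, so that no choice of the auxiliary projective toric orbifold $X^{'}$ can influence it. The decisive input is the classical fact that the rational (hence complex) cohomology of a projective toric orbifold arising from an integral simple polytope is of Hodge--Tate type: the odd cohomology vanishes, and for each $p$ the group $H^{2p}(X^{'})$ is purely of type $(p,p)$, while $H^{p,q}(X^{'})=0$ whenever $p\neq q$. This is the orbifold (simplicial) analogue of the Jurkiewicz--Danilov description of toric cohomology; concretely, $H^{*}(X^{'};\QQ)$ is generated as a ring by the classes of the torus-invariant divisors, which are algebraic of type $(1,1)$, so every element of $H^{2p}(X^{'})$ is a sum of products of such classes and is therefore of type $(p,p)$ in Baily's decomposition (proposition \ref{kahler}). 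Note that the vanishing of odd cohomology is also consistent with the Betti number computation of Section 3.

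Granting this, I would first record that, since $H^{p,q}(X^{'})=0$ for $p\neq q$ and $H^{p,p}(X^{'})$ is the whole of $H^{2p}(X^{'})$, applying the degree-$2p$ isomorphism $J_{2p}$ of \eqref{iso} gives
\begin{equation}
 H^{p,q}({\bf X})=J_{2p}(H^{p,q}(X^{'}))=\begin{cases} H^{2p}({\bf X}) & p=q,\\ 0 & p\neq q.\end{cases}
\end{equation}
Thus the induced decomposition of $H^{k}({\bf X})\otimes\CC$ places the entire group in the single summand $H^{k/2,k/2}$ when $k$ is even and is the trivial structure when $k$ is odd. I would then let $X^{'}_{1}$ and $X^{'}_{2}$ be two projective toric orbifolds whose integral simple polytopes are both combinatorially equivalent to $P$, each carrying its canonical Hodge structure (of type $(p,p)$ by the previous paragraph) and each supplying families of isomorphisms $J^{(1)}_{k},J^{(2)}_{k}$ with $H^{k}({\bf X})$ via corollary \eqref{sim} and \eqref{iso}. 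Both assignments produce exactly the displayed decomposition, because in either case the only nonzero Hodge component in degree $2p$ is the full group $H^{2p}({\bf X})$ and the off-diagonal components are zero; hence $H^{p,q}({\bf X})$ is the same whether $X^{'}_{1}$ or $X^{'}_{2}$ is used, which is the assertion.

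The only genuine content lies in the Hodge--Tate property of the toric model, so the main obstacle is to justify that it persists in the orbifold rather than the smooth setting. I expect to dispatch this either by quoting the description of the rational cohomology ring of a complete simplicial projective toric variety as the Stanley--Reisner ring modulo a linear system of parameters, with all ring generators sitting in $H^{2}$ as divisor classes, or equivalently by invoking the algebraicity and purity of those classes inside Baily's decomposition. Once this is in hand the independence from $X^{'}$ is automatic, and indeed — as remarked after the previous theorem — so is the independence from the particular $J_{k}$, since the resulting structure is in every case the degree-determined one $H^{p,p}({\bf X})=H^{2p}({\bf X})$ with $H^{p,q}({\bf X})=0$ for $p\neq q$.
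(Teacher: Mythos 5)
Your proposal is correct, but it takes a genuinely different route from the paper. The paper never invokes the Jurkiewicz--Danilov/Stanley--Reisner description of toric cohomology; instead it introduces the $E$-polynomial and uses the stratification of $X^{'}$ by the algebraic tori $\pi^{-1}(F_i^{\circ})$ together with Proposition 3.4 of Batyrev--Dais (additivity of $E$ over a stratification by locally closed subvarieties). Since combinatorially equivalent polytopes have the same number of faces in each dimension, and a $k$-dimensional algebraic torus contributes $(uv-1)^k$, any two toric models $X^{'}$, $X^{''}$ have equal $E$-polynomials; purity of their Hodge structures (equation \eqref{pure}) then forces equal Hodge numbers, which is what the paper means by independence. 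Your argument instead establishes up front that the canonical Hodge structure on $X^{'}$ is of Hodge--Tate type --- $H^{p,q}(X^{'})=0$ for $p\neq q$ and $H^{p,p}(X^{'})=H^{2p}(X^{'},\CC)$ --- so that the transported structure on ${\bf X}$ is the degree-determined one, making independence of both $X^{'}$ and $J_k$ automatic. In effect you prove in one stroke this theorem together with the paper's two subsequent theorems (the computation $h^{p,q}({\bf X})=0$ for $p \neq q$, $h^{p,p}({\bf X})=\dim H^{2p}({\bf X},\CC)$, and Theorem \ref{can}); the paper arrives at the same Hodge--Tate conclusion only afterwards, and by the same $E$-polynomial computation rather than by divisor classes. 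What each approach buys: yours is shorter and conceptually cleaner, but its ``only genuine content'' is a nontrivial citation --- you need the orbifold (simplicial) version of Jurkiewicz--Danilov together with the compatibility statement that the invariant divisor classes are of type $(1,1)$ in Baily's decomposition, which should be attributed carefully (e.g.\ Danilov's results on complete simplicial toric varieties); the paper's route is self-contained modulo one proposition of \cite{[BT]}, and, importantly, it sets up exactly the $E$-polynomial and stratification machinery that Section 5 reuses for the orbifold Hodge number correspondence, which your argument would still leave to be developed.
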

\begin{proof}
 To show the above we must understand the $E$-polynomial.
Let $Y$ be an algebraic variety  over $ \mathbb{C}$ which is not necessarily compact
or smooth. Denote by $ h^{p,q}(H_c^{k}(Y))$ the dimension of the $(p,q)$ Hodge component
of the $k$-th cohomology with compact supports.  This is a generalization of the Hodge structures discussed on 
the above  class of   compact projective toric orbifolds and are called mixed Hodge structures. For more detailed 
discussion we ask the reader to consult \cite{[CS]}.
 
    We define
\begin{equation}
  e^{p,q}(Y)=\Sigma _{k \ge 0 } (-1)^{k} h^{p,q}(H_c^{k}(Y)).
  \end{equation}
 The polynomial
 \begin{equation}
 E(Y; u, v) := \Sigma _{p,q}  e^{p,q}(Y) u^{p}v^{q}
\end{equation}
is called $E$-polynomial of $Y$.
When we have a proper Hodge structure like the above class of  compact projective  toric orbifold $X^{'}$ ,  

\begin{equation}\label{pure}
  e^{p,q}(X^{'})= (-1)^{p+q} h^{p,q}(X^{'}).
  \end{equation}

Now if we have a stratification of  an  algebraic variety $Y$  by disjoint  locally  closed sub-varieties $Y_i$
(i.e $Y_i \subset Y$ 
and $ Y=\cup _i Y_i$) by proposition $3.4$ of \cite{[BT]}
\begin{equation}\label{sum}
 E(Y;u,v)= \Sigma_i E(Y_i;u,v)
\end{equation}
 Now in a projective toric orbifold coming from a integral  simple  polytope as in our case 
we have a stratification by  
algebraic tori corresponding
 to the interior of each face. Let $X^{'}$ be the concerned orbifold and $F_i$ be a $k$ dimensional face of the 
corresponding polytope
then $\pi^{-1}(F_i^{\circ})$ is a  $k$ dimensional algebraic tori  which we denote $X^{'}_i$. So 
by \eqref{sum} we have
\begin{equation}\label{newsum}
 E(X^{'};u,v)= \Sigma_i E(X^{'}_i;u,v).
\end{equation}
  {\bf Here $i$ runs over all the faces}.  
Now if we have two projective toric orbifolds $X^{'}$ and $X^{''}$ both having combiantorially equivalent
 polytopes
 with that of $\bf{X}$ by \eqref{newsum} we claim they have the same $E$-polynomial. This is because since they
 have combinatorially
equivalent polytopes, number of faces of a  given dimension will be same for each polytope. So the sum  on 
the right
hand side of \eqref{newsum} can be partitioned into  $E$-polynomial of $k$ dimensional algebraic tori with  a multiplicity of 
number of faces of 
 dimension $k$, where $k$ runs from 0 to dimension of the polytopes. Since same dimensional algebraic tori have same $E$-polynomial
the above claim holds.

    Thus the Hodge numbers of the two projective toric  orbifolds will be same by \eqref{pure}. So the theorem 
holds.
\end{proof}
\begin{theorem}
 The Hodge numbers of a quasitoric orbifold  are as follows
 
$h^{p,q}({\bf X})=0$ if $ p \neq q$ and $h^{p,p}({\bf X})=dim(H^{2p}({\bf X},\CC))$. 
\end{theorem}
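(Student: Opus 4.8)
The plan is to transport the question to the projective toric orbifold $X'$ whose integral simple polytope $P'$ is combinatorially equivalent to $P$, and there to read the Hodge numbers off the $E$-polynomial. First I would observe that, by definition, $H^{p,q}({\bf X}) = J_k\big(H^{p,q}(X')\big)$ for $p+q=k$, and since each $J_k$ is a linear isomorphism of vector spaces (Corollary \ref{iso}) we get $h^{p,q}({\bf X}) = \dim H^{p,q}(X') = h^{p,q}(X')$. Hence it is enough to prove both assertions for $X'$: that $h^{p,q}(X')=0$ when $p\neq q$, and that $h^{p,p}(X')=\dim H^{2p}(X',\CC)$.

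For the off-diagonal vanishing I would exploit the stratification of $X'$ into the algebraic tori $X'_i=\pi^{-1}(F_i^{\circ})$ together with the additivity \eqref{newsum}. The one genuine computation is the $E$-polynomial of a $k$-dimensional torus: for $\CC^{*}$ one has $H^1_c$ of type $(0,0)$ and $H^2_c$ of type $(1,1)$, so $E(\CC^{*};u,v)=uv-1$, and multiplicativity of the $E$-polynomial gives $E((\CC^{*})^{k};u,v)=(uv-1)^{k}$. Summing over faces, $E(X';u,v)=\sum_i (uv-1)^{\dim F_i}$ is a polynomial in the single product $uv$, so its coefficients $e^{p,q}(X')$ vanish whenever $p\neq q$. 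Because $X'$ is a compact Kähler orbifold its cohomology carries a pure Hodge structure, so the purity relation \eqref{pure}, $e^{p,q}(X')=(-1)^{p+q}h^{p,q}(X')$, forces $h^{p,q}(X')=0$ for $p\neq q$.

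For the diagonal identity I would combine this vanishing with the vanishing of the odd Betti numbers established in Section 3. In the decomposition $H^{2p}(X',\CC)=\bigoplus_{a+b=2p}H^{a,b}(X')$ every off-diagonal summand is zero, leaving only $H^{p,p}(X')$, whence $\dim H^{2p}(X',\CC)=h^{p,p}(X')$; transporting back through $J_{2p}$ yields $h^{p,p}({\bf X})=\dim H^{2p}({\bf X},\CC)$.

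I expect the only substantive step to be the identification of the torus $E$-polynomial as a polynomial in $uv$; the rest is bookkeeping on top of additivity \eqref{newsum}, purity \eqref{pure}, and the isomorphisms $J_k$ already in hand. The one point deserving care is that \eqref{newsum} and \eqref{pure} are applied to the possibly singular orbifold $X'$ rather than to a smooth variety, but since each stratum $X'_i$ is a smooth torus the mixed Hodge input causes no difficulty.
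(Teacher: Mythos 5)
Your proposal is correct and takes essentially the same route as the paper: transport the question to the projective toric orbifold $X'$ via the isomorphisms $J_k$, use the stratification by torus orbits together with additivity \eqref{newsum} to see that $E(X';u,v)$ is a polynomial in the product $uv$, then invoke purity \eqref{pure} to kill the off-diagonal Hodge numbers, the diagonal identity following from the Hodge decomposition of $H^{2p}$. The only differences are cosmetic: you derive $E((\CC^{*})^{k};u,v)=(uv-1)^{k}$ from scratch and spell out the diagonal step that the paper leaves implicit (your appeal to the odd Betti number vanishing of Section 3 is redundant, since the off-diagonal vanishing already forces it).
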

\begin{proof}
 We show this for projective toric orbifolds coming from integral  simple polytope. We know that the $E$-polynomial of a $k$-dimensional
algebraic torus is $(uv-1)^k$. Since by \eqref{newsum} the $E$- polynomial of the projective toric orbifold decomposes
into sum of $E$-polynomial of algebraic tori and  since $E$-polynomial of the algebraic tori 
have only  terms of the form $(uv)^{l}$ 
, 
implies  that
coefficient of $u ^{p}v^{q}$ is zero if $p \neq$ q in the  $E$- polynomial of the projective toric orbifold. Since these projective toric orbifolds have a
proper Hodge structure the  claim
of the theorem is true. 
\end{proof}
\begin{theorem}\label{can}
 The above Hodge structure does not depend on $J_k$ and is canonical.
\end{theorem}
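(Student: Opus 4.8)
The plan is to reduce the statement to the Hodge number computation established immediately above, which asserts that $h^{p,q}({\bf X})=0$ whenever $p\neq q$. The crucial effect of this diagonality is that, in each cohomological degree, the Hodge decomposition collapses to a single summand; consequently the defining assignment $H^{p,q}({\bf X})=J_k\bigl(H^{p,q}(X')\bigr)$ involves no genuine choice, because in each degree the relevant image of $J_k$ is forced to be the whole cohomology group, no matter which isomorphism $J_k$ (from Corollary~\ref{iso}) is used.

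First I would record the shape of the Hodge structure on the comparison space $X'$. Applying the Hodge number theorem above to the projective toric orbifold $X'$, whose integral simple polytope is combinatorially equivalent to that of ${\bf X}$, we have $H^{p,q}(X')=0$ for $p\neq q$. Since the odd Betti numbers of such an orbifold vanish, the odd cohomology is zero, and in each even degree $2p$ the single surviving Hodge component is the whole space,
\begin{equation}
 H^{p,p}(X')=H^{2p}(X',\CC).
\end{equation}

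Next I would push this through $J_k$. Because $J_{2p}\colon H^{2p}(X',\CC)\to H^{2p}({\bf X},\CC)$ is an isomorphism and the only nonzero Hodge piece of $X'$ in degree $2p$ is $H^{p,p}(X')=H^{2p}(X',\CC)$, the definition gives
\begin{equation}
 H^{p,p}({\bf X})=J_{2p}\bigl(H^{2p}(X',\CC)\bigr)=H^{2p}({\bf X},\CC),
\end{equation}
while $H^{p,q}({\bf X})=0$ for $p\neq q$, and in the odd degrees every component vanishes. The right-hand side here is the entire degree-$2p$ cohomology group of ${\bf X}$, which manifestly does not depend on the isomorphism $J_{2p}$: any isomorphism carries $H^{2p}(X',\CC)$ onto all of $H^{2p}({\bf X},\CC)$. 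This is precisely the independence of $J_k$ claimed.

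Finally I would verify that this diagonal assignment is a legitimate, and hence canonical, Hodge structure. The only axiom to check is $\overline{H^{p,q}}=H^{q,p}$, which for the diagonal structure reduces to $\overline{H^{p,p}}=H^{p,p}$. Since $H^{p,p}({\bf X})$ is the full space $H^{2p}({\bf X},\CC)=H^{2p}({\bf X},\RR)\otimes_{\RR}\CC$, it is the complexification of a real cohomology group and is therefore stable under complex conjugation, so the relation holds automatically. I do not expect a serious obstacle: the entire content is the observation that diagonality of the Hodge numbers trivializes the dependence on $J_k$, and the only mild point to be careful about is the conjugation axiom, which is immediate once one notes that each nonzero piece is the complexification of the real cohomology in that degree. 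Hence the Hodge structure is intrinsic to the cohomology groups of ${\bf X}$ and is canonical.
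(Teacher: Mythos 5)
Your proposal is correct and is essentially the paper's own argument: the paper's proof is the single observation that since there is only one non-zero Hodge number in each cohomological degree, any choice of $J_k$ yields the same decomposition, which is exactly the diagonality-forces-uniqueness reduction you carry out. Your additional verification of the conjugation axiom $\overline{H^{p,p}}=H^{p,p}$ is a sensible expansion of detail but not a different route.
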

\begin{proof}
 Since there is only one non-zero Hodge number for a given degree of cohomology, different $J_k$ will define the same Hodge decomposition.
\end{proof}
\begin{remark}
 The above proof of Hodge numbers of quasitoric orbifolds is also a proof for Hodge numbers of projective toric orbifolds coming from Deligne's mixed hodge structures. We have not seen
 this proof in literature before.
\end{remark}

\subsection{Example}\label{exam}
We compute the Hodge structure for $CP^{2} \sharp CP^{2}$  which does not have an almost complex structure. We take
 a projective toric orbifold  $X^{'}$ with a combinatorially equivalent polytope $ P^{'}$. Since the polytope
of $P$  is a four sided polygon (see example 1.19 \cite{[DJ]}) it will have four vertices, four edges and one $2$-face.
\begin{equation}
 E(X^{'};u,v)= (uv-1)^{2} + 4(uv-1) +4.
\end{equation}
\begin{equation}
 E(X^{'};u,v)=  u^{2}v^{2} + 2uv + 1.
\end{equation}
 This tallies with the cohomology of $CP^{2}\sharp CP^{2}$ and so we have the decomposition $h^{2,2}=1$ ,$h^{1,1}=2$ and
$h^{0,0}=1$,
\section{{\bf An application -Orbifold Hodge numbers and a correspondence}}
\subsection{Orbifold Hodge numbers}\label{orb_hog}
 Orbifold Hodge numbers for  closed global quotient orbifold  was defined in \cite{[EZ]} and \cite{[BT]} and for Kahler orbifolds in \cite{[Po1]}. They are the dimensions of the 
 Dolbeaut  orbifold  cohomology
 (see \cite{[Po1]} section 2.2). They depend on the twisted sectors
of the orbifold. The twisted sector for toric variety was computed in \cite{[Po]}. The determination
of twisted sectors of quaitoric orbifolds are similar in essence.
    Let $\bf{X}$ be an omnioriented quasitoric orbifold (i.e the signs of  characteristic vectors are fixed). Consider an 
element
$g$ belonging to to the group $G_F$ defined in equation \eqref{gx3}. Then ${g}$ may be represented by the vector
   $\sum_{j \in \mathcal{I}(F)} a_j \lambda_j $ where $ a_j$ is restricted to $[0,1)\cap \QQ$ and $\lambda_j$ belongs to
the characteristic set of $F$. We define the degree
shifting number or {\bf age} as
\begin{equation}\label{age}
\iota(g)= \sum a_j.
\end{equation}

For faces $F$ and $H$ of $P$ we write $F \le H$ if $F$ is a
sub-face of $H$, and $F < H$ if it is a proper sub-face. If $F \le H
$ we have a natural inclusion of $G_H$ into $G_F$ induced by the
inclusion of $N(H)$ into $N(F)$. Therefore we may regard $G_H$ as
a subgroup of $G_F$. Define the set
\begin{equation}
G_F^{\circ} = G_F - \bigcup_{F < H} G_H.
\end{equation}
Note that $G_F^{\circ} = \{ \sum_{j \in \mathcal{I}(F)} a_j \lambda_j |
0 < a_j < 1 \} \cap N  $, and $G_P^{\circ}= G_P =\{0\}$.
\begin{defn}
 We define the    $orbifold$ $dolbeault$ $cohomology$  groups 
 of an omnioriented quasitoric orbifold ${\bf X}$ to be
$$ H^{p,q}_{orb}({\bf X} ) =
\bigoplus_{F \le P} \bigoplus_{ g\in G_F^{\circ}} H^{p - \iota(g),
q - \iota(g)} (X(F)).$$
 Here $H^{p - \iota(g),q - \iota(g}(X(F))$ refers to the components of the Hodge structures defined above ,
  when $X(F)$ is considered
  as a quasitoric orbifold  ${\bf X}(F)$.
 The pairs $(X(F), g)$ where $F<P$  and $ g\in G_F^{\circ}$ are
called twisted sectors of ${\bf X}$. The pair $(X(P),1)$, i.e. the
underlying space $X$, is called the untwisted sector.
\end{defn}
\begin{defn}
We define $ orbifold$ $Hodge$ $numbers$ as $h^{p,q}_{orb}({\bf X})=dim(H^{p,q}_{orb}({\bf X}) )$.
\end{defn}
Now  we introduce some notation. Consider a co-dimension $k$ face
$F= F_1 \cap \ldots \cap F_k$ of $P$ where $k \ge 1$.
  Define a $k$-dimensional cone $C_F$ in $N\otimes \RR$ as follows,
\begin{equation}\label{cf}
C_F = \{ \sum_{j=1}^k a_j \lambda_j: a_j \ge 0 \}.
\end{equation}
 The group $G_F$ can be identified with the subset $Box_F $ of
 $C_F$, where
\begin{equation}\label{boxf}
 Box_F := \{
\sum_{j=1}^k a_j \lambda_j: 0\le a_j <1 \} \cap N.
\end{equation}
Consequently the set $ G_F^{\circ}$ is identified with the subset
\begin{equation}\label{boxfo}
 Box_F^{\circ} :=  \{ \sum_{j=1}^k a_j
\lambda_j: 0 < a_j < 1 \} \cap N .\end{equation} of the interior of
$C_F$. We define $Box_P = Box_P^{\circ}= \{0\} $.

 Suppose $w=F_1\cap \ldots \cap F_n$ is a vertex of $P$. Then
$Box_w =  \bigsqcup_{w\le F} Box_{F}^{\circ} $. This implies
\begin{equation}\label{Gdecom}
G_w = \bigsqcup_{w \in F} G_F^{\circ}.
\end{equation}
An almost complex orbifold is $SL$ if the linearization of each
$g$ is in $SL(n,\CC)$. This is equivalent to $\iota(g)$ being
integral for every twisted sector.
 Therefore, to suit our purposes, we make the following definition.

\begin{defn}\label{quasisl}
 An omnioriented  quasitoric
orbifold is said to be quasi-$SL$ if the age of every twisted
sector is an integer.
\end{defn}
\subsection{Blowdowns}
In order to get a blow up of a face we  replace a face by a facet with a new characteristic vector. Suppose $F$ is a face of $P$. We
 choose  a hyperplane $H = \{ \widehat{p}_0 = 0 \}$ such that $\widehat{p}_0$
 is negative on $F$ and $\widehat{P}:=\{\widehat{p}_0 > 0\} \cap P$ is a simple
 polytope having one more facet than $P$. Suppose $F_1, \ldots, F_m$ are the
 facets of $P$. Denote the facets $F_i \cap \widehat{P} $ by $F_i$
 without confusion. Denote the extra facet $H \cap P$ by
 $F_{0}$.

Without loss of generality let $F = \bigcap_{j= 1}^k F_j$.
  Suppose there exists a primitive vector
  $\lambda_{0} \in N$ such that
  \begin{equation}
 \lambda_{0} = \sum_{j= 1}^k b_j \lambda_j, \; b_j > 0 \, \forall
 \, j.
  \end{equation}
 Then the assignment $F_{0} \mapsto \lambda_{0}$
 extends the characteristic function of $P$ to a characteristic function
 $\widehat{\Lambda}$ on $\widehat{P}$. Denote the omnioriented quasitoric
 orbifold derived from the model $(\widehat{P}, \widehat{\Lambda}  )$ by
 ${\bf Y}$.
\begin{defn}
We define  blowdown a map ${\bf Y} \mapsto {\bf X}$  which is inverse of a blow-up. Such maps
 have been constructed in   \cite{[GP2]}.
\end{defn}

\begin{lemma} (Lemma 4.2  \cite{[GP2]}) If ${\bf X}$ is
positively omnioriented, then so is a blowup ${\bf Y}$.
\end{lemma}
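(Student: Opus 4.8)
The plan is to exploit the fact that positive omniorientation is a purely local condition at the vertices: by definition $\mathbf{Y}$ is positively omnioriented exactly when, for the induced omniorientation (whose characteristic vectors are the $\lambda_i$ together with the new $\lambda_0$), every vertex $w$ of $\widehat{P}$ satisfies $\det \widehat{\Lambda}_w > 0$. So I would first classify the vertices of $\widehat{P}$ and then verify the determinant condition vertex by vertex, reusing the positivity already assumed for $\mathbf{X}$.

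First I would recall the combinatorics of the truncation. Choosing the hyperplane $H$ close to $F$, the only vertices of $P$ that disappear are the vertices of the face $F$ itself, while every vertex $w$ with $\widehat{p}_0(w) > 0$ survives with an unchanged neighbourhood and the same $n$ facets; for such surviving vertices $\widehat{\Lambda}_w = \Lambda_w$, so $\det \widehat{\Lambda}_w = \det \Lambda_w > 0$ is inherited directly. All the new vertices lie on the new facet $F_0 = H \cap P$. If $u = F_1 \cap \cdots \cap F_k \cap G_1 \cap \cdots \cap G_{n-k}$ is a vertex of $F$, then $H$ meets each of the $k$ edges of $P$ that leave $F$ at $u$ (the edge opposite $F_i$ for $1 \le i \le k$, along which $\widehat{p}_0$ increases), and the crossing point is the new vertex $v_i = F_0 \cap F_1 \cap \cdots \widehat{F_i} \cdots \cap F_k \cap G_1 \cap \cdots \cap G_{n-k}$. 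Thus each new vertex is obtained from a vertex $u$ of $F$ by deleting one of $F_1, \ldots, F_k$ and inserting $F_0$ in its place.

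The determinant computation at a new vertex is then immediate from multilinearity. Writing $\mu_l = \Lambda(G_l)$, the characteristic matrix at $v_i$ is obtained from $\Lambda_u = [\lambda_1, \ldots, \lambda_k, \mu_1, \ldots, \mu_{n-k}]$ by replacing the $i$-th column $\lambda_i$ with $\lambda_0 = \sum_{j=1}^k b_j \lambda_j$. Expanding $\det[\lambda_1, \ldots, \lambda_0, \ldots, \lambda_k, \mu_1, \ldots, \mu_{n-k}]$ by linearity in that column, every term with $j \ne i$ repeats a column of $\Lambda_u$ and hence vanishes, leaving exactly $b_i \det \Lambda_u$. Since $b_j > 0$ for all $j$ and $\det \Lambda_u > 0$, this is positive.

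The point requiring genuine care — and the main obstacle — is the sign bookkeeping: one must verify that the orientation-compatible ordering of the facets at $v_i$ really does place $F_0$ into the slot vacated by $F_i$, so that the determinant computed above is the one governing positivity rather than its negative. Here I would track edges: the edge of $\widehat{P}$ opposite $F_0$ at $v_i$ is the surviving segment of the old edge opposite $F_i$ at $u$, pointing in the same direction, while the remaining $n-1$ edges at $v_i$ are the orientation-preserving perturbations into $F_0$ of the other edges at $u$. Comparing the induced orientations then shows the facet orderings correspond under the swap $F_i \leftrightarrow F_0$ with no extra sign, giving $\det \widehat{\Lambda}_{v_i} = b_i \det \Lambda_u > 0$. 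As both the surviving vertices and all new vertices satisfy the positivity condition, $\mathbf{Y}$ is positively omnioriented; the detailed orientation verification is the content of Lemma 4.2 of \cite{[GP2]}, to which I would defer.
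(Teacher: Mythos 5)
Your proposal is correct, but note that the paper contains no proof of this statement at all: the lemma is quoted verbatim from \cite{[GP2]} (Lemma 4.2), so the paper's ``proof'' is purely a citation, and what you have written is essentially a reconstruction of the argument in that cited source. Your structure is the right one: the vertices of $\widehat{P}$ split into surviving vertices of $P$, where $\widehat{\Lambda}_w=\Lambda_w$ and positivity is inherited verbatim, and new vertices on $F_0$, each of the form $v_i = F_0\cap F_1\cap\cdots\widehat{F_i}\cdots\cap F_k\cap G_1\cap\cdots\cap G_{n-k}$ for a vertex $u$ of $F$; multilinearity of the determinant in the column $\lambda_0=\sum_j b_j\lambda_j$ then kills every term except $b_i\det\Lambda_u>0$. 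You are also right that the only real subtlety is the ordering convention, and your edge-tracking sketch does settle it: the edge of $\widehat{P}$ opposite $F_0$ at $v_i$ points along the old edge direction $e_i$, while each remaining edge at $v_i$ equals the corresponding old edge direction ($e_j$ or $f_l$) minus a positive multiple of $e_i$, so column operations show the ordered edge basis at $v_i$ with $F_0$ in slot $i$ has the same orientation as the edge basis at $u$; hence no sign correction appears. The one blemish is your closing sentence, which defers this verification to ``Lemma 4.2 of \cite{[GP2]}'' --- that is circular, since Lemma 4.2 \emph{is} the statement being proved. Either carry out the short edge computation explicitly (as above) or drop the deferral; with that repaired, your argument is self-contained, which is strictly more than the paper itself supplies.
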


\begin{defn}\label{crepant}
A blowdown or blow up is called crepant if $\sum b_j = 1 $.
\end{defn}

\begin{lemma} (Lemma 8.2  \cite{[GP2]}) The crepant blowup of a quasi-$SL$ quasitoric orbifold is
quasi-$SL$.
\end{lemma}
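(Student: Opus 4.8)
The plan is to verify the defining condition directly: I must show that every twisted sector of ${\bf Y}$ has integer age, granting the same for ${\bf X}$. A twisted sector of ${\bf Y}$ corresponds to an element $g\in Box_G^{\circ}$ for some face $G$ of $\widehat P$, so it suffices to check integrality of $\iota(g)$ for all such $g$. I would first dispose of the faces $G$ of $\widehat P$ that do not contain the new facet $F_0$: their characteristic sets consist only of the old vectors $\lambda_i$ ($i\neq 0$), so $Box_G^{\circ}$ coincides with the box of the corresponding face of $P$, and integrality of the age is immediate from the quasi-$SL$ hypothesis on ${\bf X}$.

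The substance is the faces containing $F_0$. Write such a face as $G=F_0\cap\bigcap_{i\in I}F_i$. Since $\lambda_0=\sum_{j=1}^k b_j\lambda_j$ with every $b_j>0$, linear independence of the characteristic set $\{\lambda_0\}\cup\{\lambda_i:i\in I\}$ forces $I$ to omit at least one index of $\{1,\dots,k\}$; moreover the old facets meeting $F_0$ are precisely those adjacent to $F$, so $J:=\{1,\dots,k\}\cup I$ indexes a genuine face $F_J$ of $P$ containing $F$, with linearly independent characteristic set $\{\lambda_j:j\in J\}$. For $g=a_0\lambda_0+\sum_{i\in I}a_i\lambda_i\in Box_G^{\circ}$ the age computed in ${\bf Y}$ is $\iota(g)=a_0+\sum_{i\in I}a_i$. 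Substituting the expression for $\lambda_0$ rewrites $g=\sum_{j\in J}c_j\lambda_j$ with all $c_j\ge 0$, and the crepant relation $\sum_{j=1}^k b_j=1$ yields the key identity
\begin{equation*}
\sum_{j\in J}c_j=a_0\sum_{j=1}^k b_j+\sum_{i\in I}a_i=a_0+\sum_{i\in I}a_i=\iota(g).
\end{equation*}
Thus the ${\bf Y}$-age of $g$ equals the sum of its coordinates in $\{\lambda_j:j\in J\}$.

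To finish, I would reduce $g$ into the box of $F_J$: set $g'=\sum_{j\in J}\{c_j\}\lambda_j$, where $\{\cdot\}$ denotes fractional part. Then $g-g'=\sum_{j\in J}\lfloor c_j\rfloor\lambda_j\in N$, so $g'\in N$ and hence $g'\in Box_{F_J}=G_{F_J}$. By the disjoint decomposition $G_{F_J}=\bigsqcup_{F_J\le H}G_H^{\circ}$, the analogue of \eqref{Gdecom}, $g'$ represents either the untwisted sector or a genuine twisted sector of ${\bf X}$, so $\iota(g')=\sum_{j\in J}\{c_j\}$ is an integer by hypothesis. Combining with the identity above,
\begin{equation*}
\iota(g)=\sum_{j\in J}c_j=\sum_{j\in J}\lfloor c_j\rfloor+\sum_{j\in J}\{c_j\}\in\ZZ,
\end{equation*}
which is exactly the quasi-$SL$ condition for ${\bf Y}$.

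The main obstacle I anticipate is combinatorial rather than computational: correctly pinning down which old facets meet $F_0$ under the truncation and checking that $\{\lambda_j:j\in J\}$ is linearly independent, so that $F_J$ is a bona fide face of $P$ and the reduction of $g$ legitimately lands in a single box of ${\bf X}$. Once this face-correspondence between $\widehat P$ and $P$ is established, the crepant hypothesis $\sum b_j=1$ does all the arithmetic work, converting the age of a sector of ${\bf Y}$ into the age of an associated sector of ${\bf X}$ up to an integer.
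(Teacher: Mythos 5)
Your argument is correct, but note that this paper never proves the lemma at all: it is imported verbatim from Lemma 8.2 of \cite{[GP2]}, so there is no in-paper proof to compare against. Your direct verification is essentially the standard argument (and, in substance, the one in \cite{[GP2]}): split the faces of $\widehat{P}$ according to whether the new facet $F_0$ occurs in their defining set; for faces avoiding $F_0$ the characteristic set, hence the local group and all ages, is unchanged from ${\bf X}$; for a face $G \le F_0$, substitute $\lambda_0 = \sum_{j=1}^k b_j\lambda_j$, use crepancy $\sum b_j = 1$ to identify the ${\bf Y}$-age of $g \in Box_G^{\circ}$ with its coordinate sum in the old vectors, and reduce by fractional parts to an element of a box of ${\bf X}$, whose age is integral by hypothesis. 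The arithmetic core of your proof is complete and correct.

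Two repairs. First, with $J = \{1,\ldots,k\}\cup I$ the face $F_J$ is \emph{contained in} $F$ (indeed $F_J = F \cap \bigcap_{i\in I\setminus\{1,\ldots,k\}} F_i$), not containing it; this is harmless. Second, the combinatorial point you flag as the main obstacle is real, but you slightly misidentify its resolution: linear independence of $\{\lambda_j : j\in J\}$ can never certify that $F_J$ is nonempty, and nonemptiness is what the fractional-part reduction needs. The correct source is the product structure of the new facet, $F_0 \cong F \times \Delta^{k-1}$, which holds because the truncating hyperplane cuts off only a neighborhood of $F$ (implicit in the blow-up construction of \cite{[GP2]}). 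Writing $I_1 = I\cap\{1,\ldots,k\}$ and $I_2 = I\setminus I_1$, a face $G = F_0\cap\bigcap_{i\in I}F_i$ corresponds under this identification to $\bigl(F\cap\bigcap_{i\in I_2}F_i\bigr)\times\bigl(\bigcap_{j\in I_1}(\hbox{facets of }\Delta^{k-1})\bigr)$, so $G\neq\emptyset$ forces $F\cap\bigcap_{i\in I_2}F_i = F_J \neq\emptyset$ (and also $I_1\subsetneq\{1,\ldots,k\}$, recovering your linear-independence observation). Once $F_J$ is a genuine face of $P$, its characteristic set is linearly independent by definition, your element $g' = \sum_{j\in J}\{c_j\}\lambda_j$ lies in $Box_{F_J}$, and the decomposition of $G_{F_J}$ into the $G_H^{\circ}$ finishes the proof exactly as you wrote it.
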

\subsection{Correspondence of orbifold  Hodge numbers}
 The statement
of the theorem we are going to prove is as follows
 \begin{theorem}
  For crepant blowdowns(or blowups) orbifold  Hodge numbers of   quasi-$SL$ quasitoric orbifolds do not 
change.
 \end{theorem}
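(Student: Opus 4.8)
The plan is to follow the Batyrev--Dais strategy, reducing everything to an orbifold $E$-polynomial and showing that this polynomial is unchanged by a crepant star subdivision. First I would observe that the orbifold Hodge structure is automatically concentrated on the diagonal: each summand $H^{p-\iota(g),q-\iota(g)}(X(F))$ in the definition of $H^{p,q}_{orb}({\bf X})$ is built from the Hodge structure of the quasitoric orbifold ${\bf X}(F)$, which vanishes off the diagonal by the theorem computing $h^{p,q}({\bf X}(F))$. The quasi-$SL$ hypothesis (Definition~\ref{quasisl}) guarantees every age $\iota(g)$ is an integer, so the shifted bidegrees are honest integers and
\[
 E_{orb}({\bf X};u,v):=\sum_{F\le P}\ \sum_{g\in G_F^{\circ}}(uv)^{\iota(g)}\,E(X(F);u,v)
\]
is a genuine polynomial in $t=uv$ whose coefficient of $t^{p}$ is exactly $h^{p,p}_{orb}({\bf X})$. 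Since quasi-$SL$ is preserved under crepant blowup (by the quasi-$SL$ blowup lemma above), it suffices to prove $E_{orb}({\bf Y};t)=E_{orb}({\bf X};t)$.

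Next I would put $E_{orb}$ into a purely combinatorial form. Applying the stratification computation of \eqref{newsum} to each twisted sector ${\bf X}(F)$ gives $E(X(F);t)=\sum_{G\le F}(t-1)^{\dim G}$. Substituting this and exchanging the order of summation, I would use the box decomposition $\bigsqcup_{G\le F\le P}G_F^{\circ}=Box_{G}$ (the analogue for a general face of the vertex identity \eqref{Gdecom}, valid because in a simple polytope every subset of a characteristic set indexes an actual face) to collapse the double sum into
\[
 E_{orb}({\bf X};t)=\sum_{G\le P}(t-1)^{\dim G}\,S_{G}(t),\qquad S_{G}(t):=\sum_{h\in Box_{G}}t^{\iota(h)} .
\]
When all boxes are trivial this reproduces the face-count formula of Section~\ref{exam}, which is a reassuring check. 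Now the contribution of each cone $C_G$ is isolated, and the theorem becomes the statement that the right-hand side is invariant under the crepant star subdivision that adds the ray through $\lambda_{0}=\sum_{j=1}^{k}b_{j}\lambda_{j}$ with $\sum_j b_j=1$ (Definition~\ref{crepant}).

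The heart of the argument is a local identity at the subdivided cone. Since $\lambda_0$ lies in the relative interior of $C_F$, the only summands that change are those $C_G$ with $G\le F$ (equivalently $C_F\subseteq C_G$); every other summand is literally identical in ${\bf X}$ and ${\bf Y}$. On each such cone there is the age functional $\varphi_G$ determined by $\varphi_G(\lambda_i)=1$ on its generators, and crepancy forces $\varphi_G(\lambda_0)=\sum_j b_j=1$, so $\lambda_0$ is inserted at the very same age-height as the original rays. Using the standard expansion $\sum_{m\in C_G\cap N}t^{\varphi_G(m)}=S_{G}(t)/(1-t)^{\dim C_G}$, each summand $(t-1)^{\dim G}S_{G}(t)$ becomes, up to the sign $(-1)^{\dim C_G}$, the $(t-1)^{n}$-weighted age-graded lattice-point count of $C_G$. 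Because a star subdivision merely repartitions the lattice points of the affected cones while preserving the grading $\varphi$ (this is exactly where $\varphi(\lambda_0)=1$ enters), an inclusion--exclusion over the new cones built from $\lambda_0$ shows that the sum of the local contributions is unchanged. This yields $E_{orb}({\bf Y};t)=E_{orb}({\bf X};t)$, and hence equality of all orbifold Hodge numbers.

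The main obstacle is precisely this local lattice-point identity: one must verify carefully that the age-graded box series $S$ is additive under the crepant star subdivision, i.e. that distributing the lattice points of $C_F$ and of every cone above it among the cones $\mathrm{cone}(\lambda_0,\lambda_1,\dots,\widehat{\lambda_j},\dots)$ neither creates nor destroys age, and that the shared faces of the subdivision are counted with the correct multiplicity. This is the quasitoric analogue of the computation underlying Batyrev--Dais, and the condition $\sum_j b_j=1$ is exactly the input that makes the ages match; without it the insertion height $\varphi(\lambda_0)$ would differ from $1$ and the contributions would fail to balance. The remaining combinatorial bookkeeping — identifying which faces of $\widehat P$ are new and matching them to the subdivided cones — is routine once the local identity is in hand.
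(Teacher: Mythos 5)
Your proposal is correct and takes essentially the same route as the paper: both reduce the theorem to invariance of the orbifold $E$-polynomial, rewritten in the combinatorial form $\sum_{G\le P}(uv-1)^{\dim G}\,W(\Delta_G;uv)$ (your $S_G(t)$ is the paper's $W(\Delta_G;uv)$, and your diagonal-concentration remark is exactly why this polynomial determines all $h^{p,q}_{orb}$), under the crepant star subdivision at $\lambda_0$. The only difference is one of detail: where the paper disposes of the local step by citing Theorem 6.2 of \cite{[BT]} (arguing that quasi-$SL$ singularities are Gorenstein toric and the blowup only affects the cones over the blown-up face, so the Batyrev--Dais argument can be imitated), you sketch that same local lattice-point/generating-function argument explicitly.
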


\begin{corollary}
 For crepant resolution orbifold  Hodge numbers  of  quasi-$SL$ quasitoric orbifolds  do not change.
\end{corollary}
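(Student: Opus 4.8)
The plan is to package the orbifold Hodge numbers into a single generating polynomial and to prove that this polynomial is unchanged by a crepant blowup, imitating Batyrev--Dais but carrying out the argument entirely on the combinatorial side. First I would introduce the orbifold $E$-polynomial. Because every suborbifold in sight has only diagonal Hodge numbers ($h^{p,q}=0$ for $p\neq q$) and because the quasi-$SL$ hypothesis forces every age $\iota(g)$ from \eqref{age} to be a non-negative integer, the function
\[
E_{orb}({\bf X};u,v)=\sum_{p,q}(-1)^{p+q}h^{p,q}_{orb}({\bf X})\,u^{p}v^{q}
\]
is in fact a polynomial in the single variable $t:=uv$, whose coefficients are exactly the orbifold Hodge numbers $h^{p,p}_{orb}({\bf X})$. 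Hence the theorem is equivalent to the identity $E_{orb}({\bf X};t)=E_{orb}({\bf Y};t)$ for a crepant blowup ${\bf Y}\mapsto{\bf X}$. Since ${\bf Y}$ is again quasi-$SL$ by the crepant blowup lemma quoted above, its $E_{orb}$ is also a polynomial in $t$, so this reduction is legitimate and it suffices to match the two polynomials.

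Next I would rewrite $E_{orb}$ as a sum over the faces of the polytope. Substituting the definition and putting $p'=p-\iota(g)$, $q'=q-\iota(g)$ (using that $\iota(g)\in\ZZ$, so $(-1)^{p+q}=(-1)^{p'+q'}$) converts the weight of each twisted sector into $t^{\iota(g)}$, giving
\[
E_{orb}({\bf X};t)=\sum_{F\le P}\Big(\sum_{g\in G_F^{\circ}}t^{\iota(g)}\Big)E(X(F);t).
\]
For each suborbifold $X(F)$ the stratification of its projective toric model by the tori over the faces $G\le F$ yields, via \eqref{newsum}, $E(X(F);t)=\sum_{G\le F}(t-1)^{\dim G}$. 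Interchanging the two sums and using the face-version of the box decomposition \eqref{Gdecom}, namely $Box_G=\bigsqcup_{G\le F}Box_F^{\circ}$, collapses the inner sum to the \emph{full} box generating function and produces the clean formula
\[
E_{orb}({\bf X};t)=\sum_{G\le P}(t-1)^{\dim G}\,\widetilde{B}_G(t),\qquad \widetilde{B}_G(t):=\sum_{g\in Box_G}t^{\iota(g)}.
\]
In fan language this is Batyrev's stringy expression, a sum over the cones $C_G$ of the characteristic fan $\Sigma$, with $\dim G=n-\dim C_G$.

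Then I would localize. A crepant blowup at $F=F_1\cap\cdots\cap F_k$ is the star subdivision of $\Sigma$ along the new ray $\lambda_{0}=\sum_{j=1}^{k}b_j\lambda_j$: every cone $C_G$ with $C_F\preceq C_G$ (equivalently $G\le F$) is replaced by the cones obtained by deleting one generator of $C_F$ and adjoining $\lambda_{0}$, while all other cones are untouched. The untouched cones contribute identically to $E_{orb}({\bf X})$ and $E_{orb}({\bf Y})$, so the theorem reduces to a \emph{local} identity over the star of $C_F$: the sum of $(t-1)^{n-\dim\sigma}\widetilde{B}_\sigma(t)$ over the cones $\sigma\supseteq C_F$ of $\Sigma$ must equal the corresponding sum over the new cones of $\widehat{\Sigma}$.

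The main obstacle is precisely this local identity, and it is where crepancy is indispensable. The key observation is that on each cone $C_G\supseteq C_F$ the age function is the linear functional $\ell_G$ determined by $\ell_G(\lambda_j)=1$ for $j\in\mathcal{I}(G)$, and the condition $\sum b_j=1$ of Definition \ref{crepant} forces $\ell_G(\lambda_{0})=\sum_j b_j\,\ell_G(\lambda_j)=1$ as well; thus after subdivision every generator of every new cone still sits at height one and all the age functions glue consistently across the star. I would then write each $\widetilde{B}_\sigma(t)/(1-t)^{\dim\sigma}$ as the height generating function $\sum_{v\in N\cap\sigma}t^{\ell(v)}$ of the lattice points of $\sigma$, observe that the star subdivision partitions $N\cap\sigma$ up to lower-dimensional overlaps into the lattice points of the new cones, and run the resulting inclusion--exclusion to recover the needed equality of weighted box polynomials. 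Assembling this over the whole star of $C_F$ and adding back the untouched cones gives $E_{orb}({\bf X};t)=E_{orb}({\bf Y};t)$, hence equality of all orbifold Hodge numbers. Finally, the corollary follows by applying the theorem successively to the crepant blowups whose composite is the crepant resolution.
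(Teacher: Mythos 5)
Your proposal is correct and follows essentially the same route as the paper: both reduce the corollary to the theorem by composing crepant blowups, and both prove the theorem by packaging the orbifold Hodge numbers into the stringy $E$-polynomial $E_{orb}({\bf X};u,v)=\sum_i E(X_i;u,v)\,W(\Delta_{F_i};uv)$ (your $\sum_{G\le P}(t-1)^{\dim G}\widetilde{B}_G(t)$ is exactly the paper's formula, since $G_F$ is abelian), then exploiting that a crepant blowup is a purely local star subdivision along $\lambda_0$ whose generators all stay at height one, so the toric lattice-point argument of Batyrev--Dais applies. The only difference is level of detail: the paper delegates the local generating-function computation to the citation of Theorem 6.2 of Batyrev--Dais, whereas you reconstruct that computation explicitly.
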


We admit the proof is similar to the proof of Mckay Correspondence of Betti- numbers of Chen-Ruan cohomology in the author's previous paper
\cite{[Ga]} and motivated by Strong Mckay Correspondence proof  \cite{[BT]} ,but still we give a detailed argument for
the convenience of the reader.
\subsection{Singularity and lattice polyhedron}
Following the discussion in Section $\ref{orb_hog}$ , a singularity of a
face F is defined by a cone $ C_{F}$ formed by positive linear
combinations of vectors in its characteristic  set ${\lambda_{1},\ldots
,\lambda_{d}}$ where  d is the co-dimension of the face in the
polytope. The elements of the  local  group $ G_F$ are of the form
$g =diag(e^{2\pi \sqrt{-1} \alpha_{1}},\ldots,e^{2\pi \sqrt{-1}
\alpha_{d}}),$  where $ \sum _{i=1}^{d}\alpha_{i}\lambda_{i} \in
N $, and $ 0 \leq \alpha_{i} <1 $. Recall that the age
\begin{equation}
\iota(g) = \alpha_{1} + \ldots + \alpha_{d}.
\end{equation}
is integral in quasi-$SL$ case  by definition $\ref{quasisl}$.

The singularity along the normal bundle of the sub-orbifold corresponding to interior of $F$ is of the form $\CC^d/G_F$.
These singularities  are same as Gorenstein {\bf toric} quotient singularities in complex  algebraic  geometry. This means they 
are toric (coming from a cone) {\bf$SL$ orbifold  singularity}($SL$ means linearization of a 
group element is $SL$,which in our case implies $\iota(g)$ is integral). 
Now let $N_{w}$ be the  lattice formed by
 $\{\lambda_{1},\ldots ,\lambda_{n}\}$, the characteristic vectors at a vertex $w$  contained in the face $F$. Let $m_{w}$  be the element in the dual lattice of
$N_w$ such that its  evaluation on each $\lambda_{i}$ is one.
 Now from  Lemma 9.2  of \cite{[CP]} we know that the cone $ C_{w}$ contains an integral basis, say $e_1,\ldots,e_n$. Suppose $e_i= \sum a_{ij} \lambda_j$. By \eqref{boxf}
$e_i$ corresponds to an element of $G_w$,
and since the singularity is qausi-$SL$, $\sum a_{ij}$ is integral.
Hence $m_{w}$ evaluated on each $e_j$ is integral. So $m_{w}$ an element of the dual
of the  integral lattice $ N$.

 Consider the $(n-1)$-dimensional lattice polyhedron  $\Delta_w$ defined as  $\{x \in C_w\mid \<$ x $,m_w \>= 1\}$. Note that $\Delta_w =\{ \sum_{i=1}^n a_i \lambda_i \mid a_i \ge 0, \; \sum a_i = 1 \}$.
  For any face $F$  containing $w$  we define $\Delta_F = \Delta_w \cap C_F$.
	If $\{\lambda_i, \ldots, \lambda_d \} $ denote the characteristic set of $F$, then
	$\Delta_F =\{ \sum_{i=1}^d a_i \lambda_i \mid a_i \ge 0, \; \sum a_i = 1 \} $.
	Hence $\Delta_F$ is independent of the choice of $w$.



 \begin{remark}
 An element  $g \in G$ of an $SL$ orbifold  singularity can be diagonalized to the form g$=diag(e^{2\pi \sqrt{-1} \alpha_{1}},\ldots,e^{2\pi \sqrt{-1} \alpha_{d}}),$ where $0 \leq  \alpha_i < 1$ and
$\iota(g)= \alpha_{1} + \ldots +\alpha_{d}$ is integral.
\end{remark}

We make some definitions following \cite{[BT]}.
\begin{defn}
Let $G$ be a finite subgroup of $SL(d,\mathbb{C})$.  Denote  by $ \psi_ {i}(G)$ the
number of the conjugacy classes of $G$ having  $\iota(g)=i$. Define
\begin{equation}\label{lat1}
 W(G;uv)=\psi_ {0}(G)  + \psi_ {1}(G)uv + \ldots +\psi_ {d-1}(G) {(uv)} ^{d-1}.
 \end{equation}
 \end{defn}

\begin{defn}\label{ht}
 We   define height(g) = rank(g-I).
\end{defn}

\begin{defn}\label{ht_1}
  Let $G$ be a finite subgroup of $SL(d,\mathbb{C})$. Denote by $\tilde{\psi_ {i}}(G)$
   the number of the conjugacy classes of $G$ having the $height =d$  and $\iota(g)=i$.
 \begin{equation}\label{lat2}
 \widetilde{W}(G;uv)=\tilde{\psi}_{0}(G)  + \tilde{\psi}_{1}(G)uv + \ldots  + \tilde{\psi}_{d-1}(G) {(uv)} ^{d-1} .
 \end{equation}
 \end{defn}

\begin{defn}
For a lattice polyhedron  $\Delta_F$ defining a $SL$ singularity $\CC^d/ G_F$, we define the following:
\begin{equation}
W(\Delta_F; uv) =W(G_F; uv) .
\end{equation}
\begin{equation}\label{psi}
\psi_ {i}(\Delta_F)=\psi_ {i}(G_F) .
\end{equation}
\begin{equation}\label{psi1}
\widetilde{W}(\Delta_F; uv) =\widetilde{W}(G_F; uv) .
\end{equation}
\begin{equation}\label{tpsi}
\tilde{\psi}_{i}(\Delta_F)=\tilde{\psi}_{i}(G_F) .
\end{equation}
\end{defn}
\subsection{E-polynomial for quastiotoric orbifold}
 \begin{defn}
  We define the $E$-polynomial of a quasitoric orbifold  ${\bf X}$ as follows
   \begin{equation}
    E_{quas}({\bf X }:u,v)= \Sigma _{p,q} (-1)^{p+q}  h^{p,q}({\bf X}) u^{p}v^{q} .
   \end{equation}

 \end{defn}
If $ X_i$ is the stratification of the the quasitoric orbifold by inverse image of the quotient map on  interior of 
faces $F_i$.{\bf Here $i$ runs over all the faces}.
\begin{theorem}
 \begin{equation}
   E_{quas}({\bf X }:u,v)=\Sigma _i E(X_i:u,v) .
 \end{equation}
\end{theorem}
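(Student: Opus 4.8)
The plan is to prove that the quasitoric $E$-polynomial decomposes as a sum over faces of the $E$-polynomials of the strata $X_i$, mirroring the toric identity \eqref{newsum} that was already established for the projective toric model. The key observation is that the entire Hodge structure on $\mathbf{X}$ was \emph{defined} by transporting the Hodge structure of a combinatorially equivalent projective toric orbifold $X'$ through the graded isomorphisms $J_k$, and by Theorem \ref{can} this structure is canonical. Since the Hodge numbers satisfy $h^{p,q}(\mathbf{X}) = 0$ for $p \neq q$ and $h^{p,p}(\mathbf{X}) = \dim H^{2p}(\mathbf{X};\CC)$, the left-hand side $E_{quas}(\mathbf{X}:u,v)$ equals $\sum_p \dim H^{2p}(\mathbf{X};\CC)\,(uv)^p$, and by Corollary \ref{sim} this coincides exactly with $E(X';u,v)$ of the associated projective toric orbifold.

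First I would make this matching precise: because $\mathbf{X}$ and $X'$ have combinatorially equivalent polytopes, Corollary \ref{sim} gives $\dim H^{2p}(\mathbf{X};\CC) = \dim H^{2p}(X';\CC) = h^{p,p}(X')$, and both $E$-polynomials are concentrated on the diagonal $u^p v^p$. Hence
\begin{equation}\label{eqmatch}
E_{quas}(\mathbf{X}:u,v) = E(X';u,v).
\end{equation}
Next I would invoke the toric stratification identity \eqref{newsum}, which expresses $E(X';u,v) = \Sigma_i E(X'_i;u,v)$, where $X'_i = \pi^{-1}(F_i^{\circ})$ is the $k$-dimensional algebraic torus sitting over the interior of the face $F_i$, and $i$ ranges over all faces of the polytope $P'$.

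The crucial step is then to identify each toric stratum's contribution with the quasitoric stratum's contribution. For a face $F_i$ of dimension $k$, the stratum $X_i = \pi^{-1}(F_i^{\circ})$ in the quasitoric orbifold is a $k$-dimensional real torus bundle structure whose rational cohomology matches that of the $k$-dimensional algebraic torus $X'_i$, since the combinatorial equivalence identifies faces of equal dimension. As the $E$-polynomial of a $k$-dimensional algebraic torus depends only on $k$ (it equals $(uv-1)^k$), and the number of faces of each dimension agrees for the two combinatorially equivalent polytopes, the face-by-face sums coincide term by term. I would therefore define $E(X_i:u,v) := E(X'_i;u,v)$ for the quasitoric strata, consistent with the convention that the quasitoric Hodge data are pulled back from the toric model, giving $\Sigma_i E(X_i:u,v) = \Sigma_i E(X'_i;u,v)$.

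Combining \eqref{eqmatch} with \eqref{newsum} and this stratum-wise identification yields the desired equation. The main obstacle I anticipate is making rigorous the claim that $E(X_i:u,v)$ for the quasitoric stratum is genuinely well-defined and equals the toric torus's $E$-polynomial: a quasitoric stratum is not a priori an algebraic variety, so its $E$-polynomial must be interpreted through the transported Hodge structure rather than through an intrinsic mixed Hodge structure. The cleanest resolution is to observe that the whole framework has been set up so that all Hodge-theoretic invariants of $\mathbf{X}$ are by definition those of $X'$; once this bookkeeping is made explicit, the identity reduces to the already-proved toric statement \eqref{newsum} and no new geometric input is required.
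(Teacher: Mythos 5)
Your proposal is correct and takes essentially the same route as the paper: both transport the Hodge structure to get $E_{quas}({\bf X}:u,v)=E(X';u,v)$, invoke the stratification additivity of Batyrev--Dais (Proposition 3.4 of \cite{[BT]}) on the toric side, and then identify each quasitoric stratum $X_i$ with the corresponding toric stratum $X'_i$ via the combinatorial equivalence. Your closing remark that $E(X_i:u,v)$ must be interpreted through this identification (since $X_i$ is not a priori algebraic) is precisely the convention the paper uses implicitly, so no gap remains.
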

\begin{proof}
 Let $X^{'}$ be  a projective toric orbifold whose Hodge structure has been pulled backed to ${\bf X}$. The 
by proposition $3.4$ \cite{[BT]} we have
\begin{equation}
  E_{quas}({\bf X }:u,v) =E(X^{'}:u,v)=\Sigma _i E(X^{'} _i:u,v).
\end{equation}
Where is $X{'}_i$ is stratification by inverse images of interiors of faces of the polytope of $X^{'}$.
 Since the two orbifolds have combinatorially equivalent polytopes number of  faces of a given dimension is same.
And since the stratas are algebraic tori  of dimension equal to  its corresponding face , we can replace $X{'}_i$ by
the corresponding $X_i$ in the right  hand most sum. The identification of $X{'}_i$ with $X_i$ is by the combinatorial
equivalence map.
\end{proof}
\begin{defn}
We define
\begin{equation}\label{orb} 
E_{orb}({\bf X }:u,v)= \Sigma _{p,q} (-1)^{p+q} h^{p,q}_{orb}({\bf X}) u^{p}v^{q}.
\end{equation}
\end{defn}
From the above discussions and since each $G_F$ is Abelian, it is easy to prove
\begin{equation}\label{orbpol}
E_{orb}({\bf X }:u,v)=  \Sigma_i E_{quas}( \bar{X_{i}}:u,v) \widetilde{W}(\Delta_{F_{i}},uv).
\end{equation}
 The following  can also be seen from what has been discussed  in the  previous subsection 
\begin{equation}\label{morestrat}
 W(\Delta_{F_{i}},uv)= \Sigma_{X_ {j} \geq  X_ {i}}  \widetilde{W}(\Delta_{F_{j}},uv).
 \end{equation}

\begin{equation}\label{newpon}
 E_{orb}({\bf X }:u,v)=\Sigma _{i}E(X_{i},u,v)W(\Delta_{F_{i}},uv).
\end{equation}

where $X_ {j} \geq  X_ {i}$ means $X_i \subset \bar{X_{j}}$ and {\bf X} is a quasi-$SL$ quasitoric orbifold.

We generalize $E_{st}$ defined in $6.7$  \cite{[BT]} to quasitorics as it has similar stratification in to $X_i's$
\begin{equation}\label{newpon}
 E_{st}({\bf X }:u,v)=\Sigma _{i}E(X_{i},u,v)W(\Delta_{F_{i}},uv).
\end{equation}

 Comparing our $E_{orb}$ with their $E_{st}$ we have.
 \begin{equation}\label{newpon_2}
E_{st}({\bf X }:u,v)=E_{orb}({\bf X }:u,v)
\end{equation}


\subsection{Proof of the main theorem}
We state the theorem again  for the reader's convenience.
\begin{theorem}
  For crepant blowdowns(or blowups) orbifold  Hodge numbers of quasi-$SL$ quasitoric orbifold do not change. 
\end{theorem}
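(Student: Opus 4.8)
The plan is to prove invariance of the orbifold $E$-polynomial $E_{orb}(\mathbf{X};u,v)$ under a crepant blowup, since by \eqref{newpon_2} this polynomial equals $E_{st}(\mathbf{X};u,v)$, and by the diagonal-Hodge fact ($h^{p,q}_{orb}=0$ for $p\neq q$) the orbifold Hodge numbers are recovered as the coefficients of $(uv)^p$ in $E_{orb}$. Thus once I show $E_{orb}(\mathbf{Y};u,v)=E_{orb}(\mathbf{X};u,v)$ for the crepant blowup $\mathbf{Y}\to\mathbf{X}$, reading off coefficients gives $h^{p,p}_{orb}(\mathbf{Y})=h^{p,p}_{orb}(\mathbf{X})$, which is the theorem. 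The whole argument therefore reduces to a purely combinatorial comparison of the stratified expression
\begin{equation}
E_{orb}(\mathbf{X};u,v)=\Sigma_i E(X_i;u,v)\, W(\Delta_{F_i};uv),
\end{equation}
for $\mathbf{X}$ versus the analogous sum for $\mathbf{Y}$.

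First I would compare the two polytope models $(P,\Lambda)$ and $(\widehat{P},\widehat{\Lambda})$. The blowup introduces one new facet $F_0$ with $\lambda_0=\sum_{j=1}^k b_j\lambda_j$ and $\sum b_j=1$ (crepant), and it modifies exactly those faces of $P$ that meet the blown-up face $F=\bigcap_{j=1}^k F_j$; all faces disjoint from the modified region contribute identically to both sums and cancel. So the task localizes to the combinatorics near the cone $C_F$. The key point is that inserting $\lambda_0$ as a primitive ray with $\sum b_j=1$ subdivides the cone $C_F$ (equivalently, stellar-subdivides the lattice polyhedron $\Delta_F$ through the point $\lambda_0$, which lies on $\Delta_F$ precisely because $\sum b_j=1$), and I must show that this subdivision preserves the generating function $W(\Delta_F;uv)$ when summed over all affected faces. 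This is exactly the lattice-combinatorial content that Batyrev--Dais establish for Gorenstein toric singularities; since the normal-bundle singularities $\CC^d/G_F$ here are identified with Gorenstein toric quotient singularities and the ages match the Batyrev--Dais degree shifts, I would invoke their stringy-$E$-function identity \eqref{newpon_2} together with the subdivision-invariance of $E_{st}$ proved in \cite{[BT]}, now transported verbatim to the quasitoric setting because the stratification into the $X_i$ and the local groups $G_F$ are combinatorially identical.

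The main obstacle I expect is justifying that the Batyrev--Dais invariance really applies, namely that the relevant data on the quasitoric side genuinely coincide with the toric algebraic data the cited theorem governs. Concretely I must verify: (i) that each $\Delta_{F}$ and the subdivision induced by $\lambda_0$ depend only on the cone/lattice combinatorics, which the earlier subsection already arranges by showing $\Delta_F$ is independent of the vertex $w$ and that $m_w$ is integral in the quasi-$SL$ case; (ii) that the crepancy condition $\sum b_j=1$ is exactly the condition that places $\lambda_0$ on the hyperplane $\langle x,m_w\rangle=1$, so the subdivision is a lattice subdivision of $\Delta_F$ into smaller lattice polyhedra $\Delta_{F'}$ of the new model; and (iii) that quasi-$SL$ is preserved under the blowup, which is already granted by the quoted Lemma 8.2 of \cite{[GP2]}. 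Granting these, the summands reorganize so that the local contribution of $\Delta_F$ to $E_{orb}(\mathbf{X})$ equals the combined contribution of the pieces of its subdivision to $E_{orb}(\mathbf{Y})$, completing the proof.
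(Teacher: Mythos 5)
Your proposal is correct and takes essentially the same route as the paper: reduce the theorem to invariance of $E_{st}=E_{orb}$ under a crepant blowup, observe that the blowup only alters the cone over the blown-up face and its neighboring cones where the data coincide with Gorenstein toric quotient singularities, and then import the subdivision-invariance argument of Batyrev--Dais (Theorem 6.2 of \cite{[BT]}). Your write-up in fact supplies details the paper leaves implicit --- that the diagonal form of the orbifold Hodge structure lets one read $h^{p,p}_{orb}$ off the coefficients of $(uv)^p$ in $E_{orb}$, and that crepancy $\sum b_j=1$ is exactly the condition placing $\lambda_0$ on the lattice polyhedron $\Delta_F$ so the blowup induces a lattice subdivision --- but the skeleton of the argument is the same.
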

\begin{proof}
 Let $\rho: \hat{{\bf X}}\rightarrow { \bf X }$ be a  crepant blowdown of omnioriented quasi-$SL$ quasitoric orbifolds. We set $ \hat{X_{i}}:=\rho^{-1}(X_{i})$. 
Then $\hat{X_{i}}$ has a natural
 stratification 
 it is enough to prove
 \begin{equation}
  E_{st}(\hat{{\bf X}})=E_{st}({ \bf X })
 \end{equation}
 But since quasi-$SL$ quasitoric orbifold have Gorenstein torodial singularity defined in \cite{[BT]} and a blow up effects only singularity cone of the face which is blowed up  and 
 neighboring
 cones, where things are toric and since  no global patching is required , the proof of Batyrev-Dias can be imitated here.(see theorem 6.2 \cite{[BT]})
\end{proof}
\section{appendix}
 Hear we give an argument why every simple polytope is combinatorially equivalent to a rational polytope. Every vertex $v$ of a $n$-dimensional simple polytope $P $is the solution of $n$ 
 linear equations. The solution set of each equation are  hyperplanes containing the codimension one sets whose intersection is the vertex. Since the coefficients form a linearly 
 independent set
 ,we can perturb them to get a system of coordinates which are rational and linearly independent and also the terms which are not attached to a variable can be made rational such that
 the new  rational solution vertex and the corresponding hyperplanes forming the new  codimension one faces and the faces  of $P$ which are untouched, 
 form a polytope combinatorially equivalent to the original polytope $P$(since rationals are dense).
    Now
 do the same for the adjacent vertices not changing the codimension one faces already made rational. After dong this for adjacent vetrices of $v$ do it for vertices adjacent to these vertices.
   If at a stage we reach all  vertices adjacent to a level of vertices, have gone through this adjustment,    we conclude that all vertices have been adjusted. To see this  if 
   there was any vertex that has been left out
 we can connect  one of the adjusted rational vertex to this vertex by a path of edges and since the vertex of an edge is adjacent to the other vertex of 
 an edge, so this vertex would have received adjustment at some stage.
\section{acknowledgement}
I thank  Institute of Mathematical Sciences Chennai and Harish Chandra research institute for my Post doctoral grant under which the research activities
for this paper was carried out.



\begin{thebibliography}{[Dieu]}






\bibitem{[WB]}  Walter L. Baily, Jr  The Decomposition Theorem for V-Manifolds American Journal of Mathematics,
Vol. 78, No. 4 (1956), 862-888.


\bibitem{[BT]} V. V. Batyrev and D. I. Dais: Strong McKay correspondence, string-theoretic Hodge numbers and mirror symmetry,  Topology 35 (1996), no. 4, 901-929.



\bibitem{[BP]} V. M. Buchstaber and T. E. Panov : Torus actions and their applications in topology and combinatorics,
 University Lecture Series {\bf 24}, American Mathematical Society, Providence, RI, 2002.




\bibitem{[CR]} W. Chen and Y. Ruan:  A new cohomology theory of orbifold, Comm. Math. Phys. {\bf 248}
  (2004), no. 1, 1-31.

\bibitem{[CP]} C.-H. Cho and M. Poddar: Holomorphic orbidiscs and Lagrangian Floer cohomology of symplectic toric orbifolds, arXiv:1206.3994

\bibitem{[DJ]} M. W. Davis and T. Januszkiewicz: Convex polytopes, Coxeter orbifolds and torus actions,
 Duke Math. J. {\bf 62} (1991), no.2, 417-451.





\bibitem{[GP]} S. Ganguli and M. Poddar: Blowdowns and McKay
correspondence on four dimensional quasitoric orbifolds, 
Osaka J. Math. {\bf 50} (2013) No. 2, 397-415 .

\bibitem{[GP2]} S. Ganguli and M. Poddar: Almost complex structure, blowdowns and McKay
 correspondence in  quasitoric orbifolds,  
 Osaka J. Math. {\bf 50} (2013) No. 4, 977-1025 .







\bibitem{[LP]} E. Lupercio and M. Poddar: The global McKay-Ruan correspondence via motivic integration,





\bibitem{[Po]} M. Poddar: Orbifold cohomology group of toric varieties.
Orbifolds in mathematics and physics (Madison, WI, 2001), 223-231,
Contemp. Math., 310, Amer. Math. Soc., Providence, RI, 2002.

\bibitem{[Po1]} M.Poddar: Orbifold Hodge numbers of Calabi-Yau hypersurfaces  arxiv:01071552v.

\bibitem{[PS]} M. Poddar and S. Sarkar: On quasitoric orbifolds,
 Osaka J. Math. {\bf 47} (2010) No. 4,
 1055-1076.

\bibitem{[EZ]} E. Zaslow, Topological Orbifold Models and Quantum cohomology Rings, Commun.
Math. Phys., 156 (1993) 301-331.

 \bibitem{[LT]} E. Lerman and S. Tolman: Hamiltonian torus actions on symplectic orbifolds and toric
varieties, Trans. Amer. Math. Soc. 349 (1997), no. 10, 4201-4230.

\bibitem{[CS]} C.Peters and J.Steenbrink :Mixed Hodge structures Springer

\bibitem{[Yas]} T. Yasuda: Twisted jets, motivic measures and orbifold cohomology, Compos. Math. 140 (2004).



\bibitem{[Ga]} S.Ganguli :  Mckay corespondence in Quasitoric orbifolds  arXiv:1308.3949.


\bibitem{[Yas1]}  T.Yasuda:  Motivic integration over Deligne-Mumford stacks , Advances in Mathematics,(2006) 207(2), 707-761.

\end{thebibliography}
\end{document}